\theoremstyle{plain}
\newtheorem{theorem}{Theorem}[section]
\newtheorem{proposition}[theorem]{Proposition}
\newtheorem{corollary}[theorem]{Corollary}
\newtheorem{lemma}{Lemma}[section]
\theoremstyle{definition}
\newtheorem{definition}[theorem]{Definition}
\newtheorem{remark}{Remark}[section]
\newtheorem{example}[theorem]{Example}
\def\ci{\perp\!\!\!\perp}                        
\newcommand{\bfa} {\mbox{\boldmath $\alpha$}}
\newcommand{\bft} {\mbox{\boldmath $\theta$}}
\newcommand{\bfps} {\mbox{\boldmath $\psi$}}
\newcommand{\bfl} {\mbox{\boldmath $\lambda$}}
\newcommand{\bfy} {\mbox{\boldmath $y$}}
\newcommand{\bfi}{\mbox{\boldmath $i$}}
\newcommand{\bfn}{\mbox{\boldmath $n$}}
\newcommand{\bs}[1]{\boldsymbol{#1}}
\newcommand{\mc}[1]{\mathcal{#1}}
\def\m{\mathcal}
\newcommand{\ms}[1]{\mathscr{#1}}
\newcommand{\bb}[1]{\mathbb{#1}}
\newcommand{\ind}{\mathbbm{1}}
\newcommand{\tlam}{\tilde{\lambda}}
\newcommand{\rnkp}{\text{rnk}^+_P}
\newcommand{\rnkt}{\text{rnk}^+_T}
\newcommand{\rnkc}{\text{rnk}^+_{cT}}
\newcommand{\spt}{\text{spt}}
\newcommand{\teie}{\theta_E(\bfi_E)}
\newcommand{\intlevj}[1]{C_{\theta}^{(#1)}}
\newcommand{\indset}{H}
\newcommand{\indsetj}[1]{H_{#1}}
\newcommand{\barh}{\boldsymbol{\bar{H}}}
\def\T{{ \mathrm{\scriptscriptstyle T} }}
\begin{document}
\begin{frontmatter}

\title{Tensor decompositions and sparse log-linear models}
\runtitle{Tensor rank of log-linear models}

\begin{aug}
\author{\fnms{James E.} \snm{Johndrow}\thanksref{t2}\ead[label=e1]{james.johndrow@duke.edu}},
\author{\fnms{Anirban} \snm{Bhattacharya}\thanksref{}\ead[label=e2]{anirbanb@stat.tamu.edu}}
\and
\author{\fnms{David B.} \snm{Dunson}\thanksref{t2} 
\ead[label=e3]{dunson@duke.edu}
\ead[label=u1,url]{http://www.isds.duke.edu/\textasciitilde dunson/}}

\thankstext{t2}{This research was partially support by grants ES017436 and ES017240 from the National Institute of Environmental Health Sciences (NIEHS) of the US National Institutes of Health.}
\runauthor{J.E. Johndrow et al.}

\affiliation{Duke University\thanksmark{m1} and Texas A\&M University\thanksmark{m2}}

\address{114 Old Chemistry Building\\
Duke University\\ 
Durham, NC 27708 \\
\printead{e1}\\
\printead{e3}\\
\printead{u1}} 

\address{ 155 Ireland Street \\
College Station, TX 77843 \\ 
\printead{e2}}
\end{aug}

\begin{abstract}
Contingency table analysis routinely relies on log linear models, with latent structure analysis providing a common alternative.  Latent structure models lead to a low rank tensor factorization of the probability mass function for multivariate categorical data, while log linear models achieve dimensionality reduction through sparsity.  Little is known about the relationship between these notions of dimensionality reduction in the two paradigms. We derive several results relating the support of a log-linear model to the nonnegative rank of the associated probability tensor. Motivated by these findings, we propose a new collapsed Tucker class of tensor decompositions, which bridge existing PARAFAC and Tucker decompositions, providing a more flexible framework for parsimoniously characterizing multivariate categorical data. Taking a Bayesian approach to inference, we illustrate advantages of the new decompositions in simulations and an application to functional disability data.
\end{abstract}

\begin{keyword}[class=MSC]
\kwd{Bayesian; Categorical data; Contingency table; Graphical model; High-dimensional; Low rank; Parafac; Tucker; Sparsity}
\end{keyword}


\end{frontmatter}


\section{Introduction}
Parsimonious models for contingency tables are of growing interest due to the routine collection of data on moderate to large numbers of categorical variables. We study the relationship between two paradigms for inference in contingency table models: the log-linear model (\cite{fienberg2007three}, \cite{bishop2007discrete}, \cite{agresti2002categorical}) and latent structure models (\cite{stouffer1950measurement}, \cite{gibson1955extension}, \cite{lazarsfeld1968latent}, \cite{anderson1954estimation}, \cite{madansky1960determinantal}, \cite{haberman1974log}, \cite{goodman1974exploratory}) that induce a tensor decomposition of the joint probability mass function (\cite{dunson2009nonparametric}, \cite{bhattacharya2012simplex}). In particular, we aim to understand situations where the joint probability corresponding to a sparse log-linear model has a low-rank tensor factorization, allowing us to connect the seemingly distinct notions of parsimony in the two parameterizations.  

Let $V = \{1, \ldots, p\}$ denote a set of $p$ categorical variables. We use $(y_j, j \in V)$ to denote variables, with $y_j \in \m I_j$ having $d_j = |\m I_j|$ levels. Without loss of generality, we assume $\m I_j = \{1, \ldots, d_j\}$. Let $\m I_V = \bigtimes_{j \in V} \m I_j$. Elements of $\m I_V$ are referred to as cells of the contingency table; there are $\prod_{j=1}^p d_j$ cells in total. We generically denote a cell by $\bfi$, with $\bfi = (i_1, \ldots, i_p) \in \m I_V$. The joint probability mass function of $\bfy = (y_1,\ldots,y_p)$ is denoted by $\pi$, with 
\begin{align}
\pi_{i_1\ldots i_p} = Pr(y_1 = i_1, \ldots y_p= i_p), \quad \bfi \in \m I_V. 
\end{align}
A $p$-way tensor $T \in \bb R^{d_1 \times \ldots \times d_p}$ is a multiway-array which generalizes matrices to higher dimensions \cite{kolda2009tensor}. Two common forms of tensor decomposition which extend the matrix singular value decomposition are the PARAFAC \cite{harshman1970foundations} and Tucker \cite{tucker1966some, de2000multilinear, de2000best} decompositions. Note that $\pi = (\pi_{i_1 \ldots i_p})_{\bfi \in \m I_V}$ 
can be identified with a $\bb R^{d_1 \times \ldots \times d_p}$-{\em probability tensor}, which is a non-negative tensor with entries summing to one. Given $n$ i.i.d. replicates of $\bfy$, let $\bfn(\bfi)$ denote the cell-count of cell $\bfi$. We assume the cell counts are multinomially distributed according to the probabilities in $\pi$. 

Inference for contingency tables often employs log-linear models that express the logarithms of the entries in $\pi$ as a linear function of parameters related to the index of each cell. Most of these parameters relate to interactions between the variables \cite{agresti2002categorical}. A saturated log linear model has as many parameters as $\pi$ has cells.  To reduce dimensionality, it is common to assume a large subset of the interaction parameters are zero, and estimate the model using $L_1$ regularization \cite{roth2008group, nardi2012log} or Bayesian model averaging \cite{dobra2011copula, dobra2004sparse, massam2009conjugate}. Zero interaction terms are easily interpreted in terms of conditional and marginal independence relationships among the variables. A significant literature exists on Bayesian inference for log-linear models, focusing mainly on the development of novel conjugate priors \cite{dawid1993hyper, massam2009conjugate}, model selection/averaging \cite{letac2012bayes}, and stochastic search algorithms to explore the model space (e.g. \cite{dobra2010mode}). 

An alternative approach is to assume that the $p$ variables are conditionally independent given one or more discrete latent class indices, with dependence induced upon marginalization over the latent variable(s). The attractiveness of such latent class models arises partly from easy model fitting using data-augmentation, with a Bayesian nonparametric formulation allowing the number of latent classes to be learnt from the data \cite{dunson2009nonparametric}. \cite{dunson2009nonparametric} showed that a single latent class model is equivalent to a reduced-rank non-negative PARAFAC decomposition of the joint probability tensor $\pi$, while the multiple latent class model in \cite{bhattacharya2012simplex} implied a Tucker decomposition. See also \cite{zhou2013bayesian} and \cite{kunihama2012bayesian} for extensions of these models to more complex settings. 


Latent class models and log-linear models can be unified within a larger class of graphical models with observed and unobserved variables (see e.g. \cite{lauritzen1996graphical, humphreys2003variational}). In particular, \cite{geiger2001stratified} describes relationships between the number of components in a PARAFAC expansion of $\pi$ and the topological structure of the corresponding parameter space of a log-linear model, with consequences for estimation and selection in latent structure models. Others have established additional connections between latent structure models and the algebraic topology of the log-linear model \cite{settimi1998geometry, smith2003bayesian, rusakov2002asymptotic, garcia2005algebraic, fienberg2007maximum, letac2012bayes}.

These two classes of models impose sparsity (or parsimony) in seemingly different ways, and to best of our knowledge, no connection has been established yet. The class of sparse log-linear models is often considered a desirable data generating class in high dimensional settings for flexibility and ease of interpretation, and it is important to determine whether there exist low-rank expansions for probability tensors corresponding to sparse log-linear models. Determining whether a nontrivial relationship exists is a major focus of the paper. Working with a class of weakly hierarchical log-linear models, we provide precise bounds on the tensor ranks of sparse log-linear models. There are limited results on ranks of higher-order tensors, and the techniques developed here may be of independent interest. 

A secondary goal of this work is to leverage insights from our theoretical study to propose more flexible latent structure models. Our results show that the effective number of parameters in latent structure models will grow exponentially in the number of variables under some circumstances. We propose a new tensor decomposition -- referred to as the collapsed Tucker factorization -- that generalizes PARAFAC and Tucker factorizations, and leads to more parsimonious representations of many probability tensors corresponding to sparse log-linear models.  We propose Bayes methodology for analyzing data under this new factorization.

This paper is organized as follows. Section 2 introduces notation and provides background relevant to log-linear models and latent structure models. Section 3 provides our main theoretical results on the rank of probability tensors corresponding to sparse log-linear models. Section 4 presents additional results that expand upon those in section 3 and motivate the proposed collapsed Tucker model. Section 5 presents a numerical study of the Bayesian collapsed Tucker model using simulations and a real data example. Section 6 gives further discussion of results and implications.

\section{Notation and background}
\subsection{Log-linear models}
A standard approach to contingency table analysis parametrizes $\pi$ as a log-linear model satisfying certain constraints. 
For a subset of variables $E \subset V$, we adopt the notation of \cite{massam2009conjugate} to denote by $\bfi_E$ the cells in the marginal $E$-table, so that $\bfi_E \in \m I_E := \bigtimes_{j \in E} \m I_j$. Let $\teie$ denote the interaction among the variables in $E$ corresponding to the levels in $\bfi_E$. With these notations, a log-linear model assumes the form
\begin{align} \label{eq:loglineargen}
\log(\pi_{\bfi}) = \sum_{E \subset V} \teie. 
\end{align}
As a convention, $\theta_{\emptyset}$ corresponds to $E = \emptyset$. To identify the model we choose the corner parameterization \cite{agresti2002categorical,massam2009conjugate}, which sets $\teie = 0$ if there exists $j \in E$ such that $i_j = 1$. In the binary setting ($d_j = 2$ for all $j$) with corner parametrization, any $E$ for which $\teie \ne 0$ must have every element of $\bfi_E$ equal to 2. In this case we will represent $\teie$ as $\theta_E$ since there is no ambiguity. When $d>2$, the notation $\theta_E$ refers to the collection of parameters $\{ \teie : \bfi_E \in \m I_E \}$, and $\theta_E = 0$ indicates $\teie = 0 \text{ for all }  \bfi_E \in \m I_E$. 

Let $\bs{\theta} = \{\teie : 1 \notin \bfi_E\}$ denote the collection of the free model parameters and $S_{\theta}$ denote the collection of nonzero elements of $\bs{\theta}$. A saturated model includes all free model parameters, whence $|S_{\theta}| = \prod_j d_j$. Although any model that is not saturated is technically sparse, when we refer to sparse log-linear models we have in mind settings where $|S_{\theta}| \ll \prod_j d_j$. We will be primarily concerned with how the degree and structure of sparsity affects the nonnegative tensor rank of $\pi$.


An attractive feature of log-linear models is that the parameters are interpretable as defining conditional and marginal independence relationships between the $y_j$'s. Particularly simple interpretations arise from the class of hierarchical log-linear models. A log-linear model is hierarchical \cite{massam2009conjugate, dellaportas1999markov, darroch1980markov} if for every $E \subset V$ for which $\theta_E = 0$, we have $\theta_F = 0$ for all $F \supseteq E$. Here we work with a more general class of log-linear models that contains hierarchical models. We refer to this class as weakly hierarchical. 

\begin{definition} \label{def:weaklyhierarchical}
A log linear model is weakly hierarchical when the following condition is satisfied: if $\teie = 0$ for $E \subset V$ and $\bfi_E \in \m I_E$, then $\theta_F(\bfi'_F) = 0$ for every $F \supseteq E$ and $\bfi'_F \in \m I_F$ such that $i'_j = i_j$ for all $j \in E$. 
\end{definition}

When $d_j = 2$ for all $j$, weakly hierarchical models and hierarchical models define identical subsets of log-linear models, but if any $d_j > 2$, the collection of hierarchical models is a proper subset of the collection of weakly hierarchical models. To see this, suppose a model is weakly hierarchical. Assume $\theta_E = 0$. Then, $\theta_E(\bfi_E) = 0$ for all $\bfi_E \in \mc{I}_E$. Let $F \supseteq E$. For any $\bfi'_F \in \m I_F$, $\theta_F(\bfi'_F) = 0$ by weak hierarchicality, since $\theta_E(\bfi'_E) =0$. Since $\bfi'_F$ is arbitrary, we must have $\theta_F = 0$, proving hierarchicality. 

The essential difference between hierarchical and weakly hierarchical models is illustrated by the following example. Let $V = \{1,2,3\}$ and $d_1=d_2=d_3=4$.  Suppose 
$$
S_{\theta} = \left\{ \theta_{\{1\}}(2), \theta_{\{2\}}(2), \theta_{\{3\}}(2), \theta_{\{1,2\}}(2,2), \theta_{\{1,3\}}(2,2), \theta_{\{2,3\}}(2,2), \theta_{\{1,2,3\}}(2,2,2) \right\}.
$$
In other words, any interactions that correspond to all variables in $E$ taking level 2 are nonzero, and all others are zero. This model is weakly hierarchical but not hierarchical. For a model to be hierarchical, the collection of nonzero parameters must be uniquely specified by a generator -- a collection of subsets of $V$. For weakly hierarchical models, some interactions corresponding to a single subset $E$ may be zero and others nonzero, so long as definition \ref{def:weaklyhierarchical} is satisfied.

\subsection{Tensor Factorization Models}
An alternative to log-linear models is latent structure analysis (\cite{stouffer1950measurement, gibson1955extension, lazarsfeld1968latent, anderson1954estimation, madansky1960determinantal, haberman1974log, goodman1974exploratory}), which assumes the $y_1,\ldots,y_p$ are conditionally independent given one or more latent class variables. In marginalizing out the latent class variables, one obtains a tensor decomposition of $\pi$.  Latent structure models inducing PARAFAC and Tucker decompositions are briefly reviewed below. 

\subsubsection{PARAFAC models}
An $m$-component non-negative PARAFAC decomposition \cite{harshman1970foundations} of a probability tensor $\pi$ is given by
\begin{align}
 \pi = \sum_{h=1}^m \nu_h \lambda^{(1)}_h \otimes \ldots \otimes \lambda^{(p)}_h =  \sum_{h=1}^m \nu_h \bigotimes_{j=1}^p \lambda^{(j)}_h,  \label{eq:parafacmrg}
\end{align}
where $\otimes$ denotes an outer product\footnote{$\{\bigotimes_{j=1}^p \lambda^{(j)}_h\}_{i_1, \ldots i_p} = \prod_{j=1}^p \lambda_{h i_j}^{(j)}$}, each $\lambda^{(j)}_h \in \Delta^{(d_j -1)}$ is an element of the $(d_j-1)$ dimensional simplex\footnote{$\Delta^{(r-1)} = \{x \in \bb R^r: x_j \geq 0 \, \forall  \, j, \, \sum_{j=1}^r x_j = 1 \}$}, and $\nu \in \Delta^{(m-1)}$. By constraining $\nu$ and the $\lambda_h^{(j)}$s to be probability vectors, it is ensured that the entries of $\pi$ are non-negative and sum to one. The vectors $\lambda^{(j)}_h$ are referred to as the arms of the tensor. 

A probabilistic PARAFAC decomposition (\cite{dunson2009nonparametric}) of $\pi$ can be induced by a single index latent class model
\begin{align}
&y_{j} \mid z  \stackrel{\text{ind.}} \sim \text{Multi}(\{1,\ldots,d_j\},\lambda^{(j)}_{z 1},\ldots,\lambda^{(j)}_{z d_j}), \nonumber \\
&\text{Pr}(z=h) = \nu_h, h = 1, \ldots, m. \label{eq:parafacZ}
\end{align}
Marginalizing over the latent variable $z$, we obtain expression (\ref{eq:parafacmrg}). 

Unlike matrices, there is no unambiguous definition of the rank of a tensor. A notion of tensor rank is derived restricting attention to PARAFAC expansions. The nonnegative PARAFAC rank of a nonnegative tensor $M$ is the minimal value of $m$ for which there exist nonnegative vectors $\tlam_h^{(j)}$ such that
\begin{align}\label{eq:nonnegrank}
 M = \sum_{h=1}^m \bigotimes_{j=1}^p \tlam_h^{(j)}.
\end{align}
We will denote the nonnegative PARAFAC rank of a tensor $M$ as $\rnkp(M)$. Every nonnegative tensor $M$ with dimension $d^p$ can be represented exactly by a $m$-term nonnegative PARAFAC expansion with $m \le d^{p-1}$, ensuring that the factorization is flexible enough to characterize any $\pi$. Note that in the case of probability tensors, the definition in \eqref{eq:nonnegrank} is equivalent to the minimum $m$ such that (\ref{eq:parafacmrg}) holds, since the weights $\nu_h$ can be absorbed into the arms $\lambda^{(j)}_h$.

\subsubsection{Tucker models}
An $m$-component nonnegative Tucker decomposition \cite{tucker1966some, de2000multilinear} alternatively expresses the entries in $\pi$ as
\begin{align}
 \pi_{c_1\ldots c_p} = \sum_{h_1=1}^m \ldots \sum_{h_p=1}^m \phi_{h_1\ldots h_p} \prod_{j=1}^p \lambda_{h_j c_j}^{(j)},  \label{eq:tucker}
\end{align}
where $\phi$ is an $m^p$ {\em core} probability tensor and $\lambda^{(j)}_h \in \Delta^{d_j-1}$ for every $h$ and $j$. The Tucker decomposition can be thought of as a weighted sum of $m^p$ tensors each having PARAFAC rank one with weights given by the entries in $\phi$; conversely, the PARAFAC is a special case of the Tucker decomposition where the core is an $m \times 1$ probability vector. 

A probabilistic Tucker expansion of a probability tensor $\pi$ can be induced by a latent class model with a vector of latent class indicators $z = (z_{1}, \ldots, z_{p})$,
\begin{align}
&y_{j} \mid z  \stackrel{\text{ind.}} \sim \text{Multi}(\{1,\ldots,d_j\},\lambda^{(j)}_{z_j 1},\ldots,\lambda^{(j)}_{z_j d_j}), \nonumber \\
&\text{Pr}(z_1=h_1, \ldots, z_p = h_p) = \phi_{h_1 \ldots h_p}. \label{eq:TuckerZ}
\end{align}
See \cite{bhattacharya2012simplex} for a class of hierarchical models that induce a structured Tucker decomposition of a probability tensor. 

The Tucker decomposition gives rise to an alternative definition of the nonnegative tensor rank of a tensor $M$ as the minimal value of $m$ such that $M$ can be expressed exactly by an expansion of the form in (\ref{eq:tucker}). We will denote the nonnegative Tucker rank of a tensor $M$ as $\rnkt(M)$. In the case where $d_j = d$ for all $j$, $\rnkt(M) \le d$. The scale of Tucker ranks is quite different from that of PARAFAC ranks because the core itself has dimension $m^p$.

\section{Main results: Tensor rank of sparse log-linear models} \label{sec:generalpd}
\subsection{PARAFAC rank result for general $p$ and $d$} \label{subsec:rnkproofsgen}
We now provide bounds on the non-negative PARAFAC rank of joint probability tensors. There are few results on ranks of tensors beyond three dimensions and the techniques developed here are likely to be of independent interest. All proofs are deferred to the Appendix. In addition to the bounds developed in this section based on probabilistic arguments, we provide algebraic constructions in the two-dimensional case in a supplementary document. 

In the results that follow, we exploit the fact that a PARAFAC expansion of a probability tensor has a dual representation as a latent variable model \eqref{eq:parafacZ}, and the PARAFAC rank of a probability tensor can be defined in terms of the support of the corresponding latent class variable. Remark \ref{rem:limcomon} re-expresses an observation from \cite{lim2009nonnegative} that formalizes this relationship. 
\begin{remark} \label{rem:limcomon}
Suppose $\pi$ is a $\prod_{j=1}^p d_j$ probability tensor, and let $y_1,\ldots,y_p$ be categorical random variables with joint distribution defined by $\pi$. Then $\rnkp(\pi) = \bigwedge_{z \in \mc{Z}} |\spt(z)|$, where $\mc{Z}$ is the collection of all finitely-supported, discrete latent variables $z$ such that
\begin{align}
Pr(y_1=i_1,\ldots,y_p =i_p | z = h) = \prod_{j=1}^p Pr(y_j=i_j | z = h), \label{eq:lvind}
\end{align}
for all $h \in \spt(z)$ and $\bfi \in \m I_V$. 
\end{remark}
Therefore, if there exists a latent variable $z$ such that (\ref{eq:lvind}) is satisfied, then the rank of $\pi$ can be at most $|\spt(z)|$. Our recipe to create such discrete random variables $z$ is to partition the measure space $\mathcal{Y}$ on which $(y_1, \ldots, y_p)$ is defined, with a level of $z$ introduced for each set in the partition. As a convention to simplify notations, for subsets $B_j \subset \m I_j$, we shall identify the event $\{y_1 \in B_1, \ldots, y_p \in B_p\}$ with the event $\bigtimes_{j=1}^p B_j$ in the discrete $\sigma$-algebra generated by $\m I_V$. With this convention, $\m Y$ is identified with $\m I_V$, so that we can avoid making references to the abstract measure space $\m Y$ and instead partition $\m I_V$. For a partition $\m P$ of $\m I_V$, and $\{A_1, \ldots, A_{|\m P|}\}$ denoting an (arbitrary) enumeration of the sets in $\m P$, we define a discrete random variable $z = z_{\m P}$ corresponding to $\m P$ as 
\begin{align}\label{eq:lat}
z = h\ind_{A_h}, \, h = 1, \ldots, |\m P|.
\end{align}
Clearly, for any $z$ as in \eqref{eq:lat}, \eqref{eq:lvind} is equivalent to 
\begin{align}
Pr(y_1=i_1,\ldots,y_p =i_p | A_h) = \prod_{j=1}^p Pr(y_j=i_j | A_h), \label{eq:lvind1}
\end{align}
for all $h = 1, \ldots, |\m P|$ and $\bfi \in \m I_V$. 
In particular, for partitions $\m P_j$ of $\m I_j$, we can define the product partition $\m P$ as
\begin{align}\label{eq:part}
\m P = \bigtimes_{j=1}^p \m P_j := \left\{ \bigtimes_{j=1}^p B_j : B_j \in \m P_j \right\}.
\end{align}
It follows from properties of the cartesian product that $\m P$ indeed forms a partition of $\m I_V$ and $|\m P| = \prod_{j=1}^p |\m P_j|$. 

We now proceed to create random variables $z$ that satisfy \eqref{eq:lvind} (or equivalently partitions $\m P$ satisfying \eqref{eq:lvind1}). 
To begin with, 
observe that \\$Pr(y_1 = i_1, \ldots, y_p = i_p)$ 
\begin{align*}
& =  Pr(y_1 = i_1 \mid y_2 = i_2, \ldots, y_p = i_p) Pr(y_2 = i_2, \ldots, y_p = i_p) \\
& = \sum_{c_2 \in \m I_2} \ldots \sum_{c_p \in \m I_p} Pr(y_1 = i_1 \mid y_2 = c_2, \ldots, y_p = c_p) \ind_{(i_2 = c_2, \ldots, i_p = c_p)} Pr(y_2 = c_2, \ldots, y_p = c_p).
\end{align*}
Therefore, one such $z$ can be obtained by setting $\m P_1 = \m I_1$ and $\m P_j = \{ \{c_j\} : c_j \in \m I_j\}$ for $j \geq 2$, so that the event $\{z = h\}$ for each $h$ designates an event of the form $\m I_1 \times \{c_2\} \times \ldots \times \{c_p\}$. Clearly, this specification yields the trivial upper bound of $d^{p-1}$ to the PARAFAC rank when all $d_j = d$. Our main target is to show that much tighter bounds can be achieved under the assumption of weak hierarchicality. 

We introduce some additional notation here. For any $\bft$, we represent the nonzero two-way or higher interaction terms as $C_{\theta} = \{(E, \bfi_E) : |E| \geq 2, \teie \ne 0\}$ and denote the subclass of non-zero two-way interactions as $C_{\theta, 2} = \{(E, \bfi_E) : |E| = 2, \teie \ne 0\}$. For a variable $j \in V$, let $C_{\theta}^{(j)}$ denote the levels of variable $j$ that share a non-zero two-way or higher order interaction with at least one other variable. For weakly hierarchical models, it is sufficient to only search over the non-zero two-way interactions, so that $C_{\theta}^{(j)} = \{ c_j \in \m I_j : \text{ there exists } j' \neq j \text{ and } c_{j'} \in \m I_{j'} \text{ such that } \theta_{\{j,j'\}}(c_j,c_{j'}) \neq 0\}$.

If the model is weakly hierarchical, it follows from definition \ref{def:weaklyhierarchical} that for any subset $C'$ of $(\intlevj{j})^c$, $y_j \ind_{C'} \ci y_{[-j]}$, where $y_{[-j]} = (y_1,\ldots y_{j-1},y_{j+1},\ldots,y_p)$. Thus, instead of having to let the levels of $z$ vary over all events of the form $\big \{ \{c_2\} \cap \ldots \cap \{c_p\} \big \}$, one can coarsen the partition $\m P$ in \eqref{eq:part} by pooling together all the levels in $(\intlevj{j})^c$ to form a single element of $\m P_j$. Further improvement can be achieved by scanning through the variables in a particular order and only considering the subset of $\intlevj{j}$ that correspond to non-zero two-way interactions with variables that appear later in the ordering. We formalize the discussion in Theorem \ref{thm:rankgeneral} below. 

\begin{theorem} \label{thm:rankgeneral}
Suppose $\pi$ is a $d^p$ probability tensor generated by a weakly hierarchical log-linear model. Let $\sigma$ be a permutation on $V$.  For each $j = 1, \ldots, p-1$, denote $G_{\sigma}^{(j)} = \{\sigma(j+1), \ldots, \sigma(p)\}$ and define $B_{\sigma(j)}$ to be the following subset of $\intlevj{j}$:
\begin{align*}
B_{\sigma(j)} = \{ i_{\sigma(j)} \in \m I_{\sigma(j)} : \text{ there exists } f \in G_{\sigma}^{(j)} \text{ and } i_f \in \m I_f \text{ such that } \theta_{\{\sigma(j), f\}}(i_{\sigma(j)},i_f) \ne 0\}.
\end{align*}
Then, the rank of $\pi$ is at most
\begin{align*}
\bigwedge_{\sigma} \prod_{j=1}^{p-1} \left(\mid B_{\sigma(j)} \mid + 1 \right).
\end{align*}
\end{theorem}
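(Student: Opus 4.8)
The plan is to invoke Remark \ref{rem:limcomon}, which reduces the problem to exhibiting a single latent variable $z$ satisfying the conditional independence \eqref{eq:lvind}, since then $\rnkp(\pi) \le |\spt(z)|$. Following the recipe developed after the remark, I would build $z = z_{\m P}$ from a product partition $\m P = \bigtimes_{v \in V} \m P_v$, so it suffices to produce partitions $\m P_v$ of $\m I_v$ for which \eqref{eq:lvind1} holds on every block and then to count the blocks. Fix the permutation $\sigma$. For each $v = \sigma(j)$ with $j \le p-1$, take $\m P_v$ to consist of the singletons $\{i\}$ for $i \in B_{\sigma(j)}$ together with one pooled block $\m I_v \setminus B_{\sigma(j)}$, and for $v = \sigma(p)$ take $\m P_v = \{\m I_v\}$ (consistent with $B_{\sigma(p)} = \emptyset$, since $G_{\sigma}^{(p)} = \emptyset$). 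Then $|\m P_{\sigma(j)}| \le |B_{\sigma(j)}| + 1$ for $j \le p-1$ and $|\m P_{\sigma(p)}| = 1$, so that $|\m P| \le \prod_{j=1}^{p-1}(|B_{\sigma(j)}| + 1)$, matching the claimed bound before the minimization over $\sigma$ (an empty pooled block only improves the count).

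It then remains to verify \eqref{eq:lvind1} on an arbitrary block $B = \bigtimes_{v} B_v$ of $\m P$, where each component $B_v$ is either a pinned singleton $\{i_v\}$ or a pooled set. Writing the conditional log-mass function on $B$ via \eqref{eq:loglineargen} as $\log \Pr(\bfy = \bfi \mid B) = \sum_{E \subset V} \teie - \log \Pr(B)$ for $\bfi \in B$, I would show that this separates additively across coordinates, i.e.\ equals $\sum_{v} h_v(i_v) + \text{const}$ on $B$; this is precisely the statement that $\pi$ restricts to a product measure on $B$, hence that the $y_v$ are mutually independent given $B$. The empty and singleton terms $\theta_{\emptyset}$ and $\theta_{\{v\}}(i_v)$ are trivially of this form, so all the content lies in the interaction terms with $|E| \ge 2$.

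The crux — and the step I expect to be the main obstacle — is to show that every interaction term $\teie$ with $|E| \ge 2$ is, restricted to $B$, a function of at most one varying coordinate. A coordinate $v \in E$ varies over $B$ only when $B_v$ is a pooled set; if at most one coordinate of $E$ is pooled, then $\teie$ depends on a single free index and is absorbed into the corresponding $h_v$, while a fully pinned $E$ contributes a constant. The remaining case is an $E$ containing two pooled coordinates $u, w$, and here I would argue $\teie = 0$ throughout $B$. Labeling the earlier of the two so that $u = \sigma(a)$, $w = \sigma(b)$ with $a < b$, the variable $w$ lies in $G_{\sigma}^{(a)}$, so $i_u$ belonging to the pooled set $\m I_u \setminus B_{\sigma(a)}$ forces $\theta_{\{u,w\}}(i_u, i_w) = 0$ for every $i_w$, by the very definition of $B_{\sigma(a)}$. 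Weak hierarchicality (Definition \ref{def:weaklyhierarchical}), applied to $\{u,w\} \subseteq E$ with the matching coordinates $i_u, i_w$, then propagates this vanishing to $\teie = 0$. This eliminates all genuinely multivariate interaction terms on $B$, yielding the desired separation and hence \eqref{eq:lvind1}.

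Since the construction and verification go through for each fixed $\sigma$, Remark \ref{rem:limcomon} gives $\rnkp(\pi) \le \prod_{j=1}^{p-1}(|B_{\sigma(j)}| + 1)$ for every permutation $\sigma$, and taking the minimum over $\sigma$ yields the stated bound. The only remaining points are bookkeeping: confirming that $\m P$ is genuinely a partition of $\m I_V$ (immediate from \eqref{eq:part}) and restricting attention to nonempty blocks so that the conditioning is well defined, neither of which affects the counting argument.
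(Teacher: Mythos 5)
Your proof is correct, but it takes a different route from the paper's argument for this theorem. The paper proceeds recursively: it conditions on the partition of $\m I_{\sigma(1)}$ into singletons from $B_{\sigma(1)}$ plus the pooled remainder, writes $\pi$ as $\sum_h \pi_h^{(1)} \circ \pi_h^{(2)}$ with $\pi_h^{(1)}$ of rank one and $\pi_h^{(2)}$ carrying the conditional law of $y_{\sigma(2)},\ldots,y_{\sigma(p)}$, invokes Lemma \ref{lem:hadamard} (submultiplicativity and subadditivity of $\rnkpm$) to get $\rnkp(\pi)\le m_1 r$, and then recurses on the conditional tensors. You instead build the full product partition $\m P=\bigtimes_v\m P_v$ in one step and verify \eqref{eq:lvind1} directly on each block by showing that every interaction term with two or more pooled coordinates vanishes — the key observation that $i_u\notin B_{\sigma(a)}$ kills $\theta_{\{u,w\}}(i_u,\cdot)$ for all $w\in G_\sigma^{(a)}$, which weak hierarchicality then propagates to all $E\supseteq\{u,w\}$, is exactly right, and the additive separation of $\log\pi_{\bfi}$ over a product block does give mutual conditional independence. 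Your route is essentially the strategy the paper reserves for the first part of Theorem \ref{thm:rankgeneraltight}, specialized to this coarser partition; it has the advantage of avoiding the recursion's implicit (and slightly delicate) claim that the conditional distribution of the remaining variables is again a log-linear model whose interacting levels are controlled by the original $B_{\sigma(j)}$'s, at the cost of a somewhat heavier one-shot bookkeeping over interaction sets $E$. Both yield the same count $\prod_{j=1}^{p-1}(|B_{\sigma(j)}|+1)$ before minimizing over $\sigma$.
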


The bound in Theorem \ref{thm:rankgeneral} gives the correct upper bound $d^{p-1}$ when the model is saturated, since then for any permutation $\sigma$ we have $|B_{\sigma(j)}| = (d-1)$ for $j = 1, \ldots, p-1$. More importantly, there is a massive reduction in the upper bound when the model is sparse; see Section \ref{subsec:rankcors} for multiple applications of Theorem \ref{thm:rankgeneral}. The bound is easy to compute and provides a useful estimate on the growth rate of the PARAFAC rank as $d$ and/or $p$ increases when the non-zero interactions are {\em uniformly} spread. However, if the interactions are highly structured, Theorem \ref{thm:rankgeneral} may yield the trivial upper bound irrespective of the true rank, as seen in Example \ref{ex:boundtrivial} below. 

Our next result provides sharper bounds on the PARAFAC rank. In the first part of Theorem \ref{thm:rankgeneraltight}, we provide a ``dimension-free'' upper bound that isn't affected by increasing $d$ as long as the true PARAFAC rank is constant. We then present a tight upper bound in the second part of Theorem \ref{thm:rankgeneraltight} which cannot be globally improved in the class of weakly hierarchical log-linear models. 


\begin{theorem} \label{thm:rankgeneraltight}
 Suppose $\pi$ is a probability tensor corresponding to a weakly hierarchical log-linear model. Let $\indset = \{\indsetj{1},\ldots,\indsetj{p} \}$ denote collections of sets of indices, where each $\indsetj{j} \subset \mc{I}_j$. Given $H$, define $T_{(C_{\theta},H)} = \{(E, \bfi_E) \in C_{\theta} : i_j \in H_j \text{ for some } j \in E \}$ and let
\begin{align}\label{eq:msH}
\ms{H} = \{H : T_{(C_{\theta},H)} = C_{\theta}\}.
\end{align}
Assume $ C_{\theta}^{(j)} \neq \emptyset$ for all $j$. Then,
\begin{align}\label{eq:tighter}
 \rnkp(\pi) \le \bigwedge_{H \in \ms{H}} \left( \prod_{j \in V} (\mid H_j \mid +1)\right).
\end{align}
For any $l \in V$, set $W_l = \{j \in V \setminus \{l\} : \mid H_j \mid = d-1\}$ and $\bar{W}_l = V \setminus W_l$. Then, a tight upper bound on $\rnkp(\pi)$ is 
\small{\begin{align}\label{eq:equality}
 \bigwedge_{H \in \ms{H}} \bigwedge_{l \in V} \left( \prod_{j \in V} (\mid H_j \mid +1) - \bigg[ \prod_{j \in W_l} (\mid H_j \mid +1) \bigg] \bigg[ \prod_{j \in \bar{W}_l} \mid H_j \mid \bigg] \right).
\end{align}}
\end{theorem}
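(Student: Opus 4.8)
The plan is to establish both inequalities by constructing explicit finitely-supported latent variables $z$ that satisfy the conditional-independence relation \eqref{eq:lvind1} and then invoking Remark \ref{rem:limcomon}, which bounds $\rnkp(\pi)$ by $|\spt(z)|$. For \eqref{eq:tighter} I would fix $H \in \ms{H}$ and use the product partition $\m P = \bigtimes_{j=1}^p \m P_j$ of the form \eqref{eq:part}, where each $\m P_j$ keeps the singletons $\{c\}$ for $c \in H_j$ and collapses the remaining levels into the single block $H_j^c := \m I_j \setminus H_j$ (discarding $H_j^c$ when empty). Then $|\m P| \le \prod_{j \in V}(|H_j|+1)$, so it suffices to show that the associated $z = z_{\m P}$ lies in $\m Z$, i.e. that \eqref{eq:lvind1} holds on every block.

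The conditional-independence check on a block $A = \bigtimes_j B_j$ is where I expect the real content to lie. Writing $\log \pi_{\bfi} = \sum_{E} \teie$, the terms with $|E| \le 1$ are already additively separable in the coordinates, so $z_{\m P} \in \m Z$ precisely when every interaction term $\teie$ with $|E| \ge 2$ restricts, on $A$, to a function of at most one coordinate that still varies on $A$. Splitting the indices of $E$ into those frozen to a singleton on $A$ and those ranging over a pooled block $B_j = H_j^c$, it is enough to rule out a nonzero $\teie$ coupling two pooled coordinates $j,j'$. Here weak hierarchicality enters: by the contrapositive of Definition \ref{def:weaklyhierarchical}, $\teie \ne 0$ forces $\theta_{\{j,j'\}}(i_j,i_{j'}) \ne 0$, so $(\{j,j'\},(i_j,i_{j'})) \in \intlev$; but $i_j \in H_j^c$ and $i_{j'} \in H_{j'}^c$ mean this pair is not hit by $H$, contradicting $H \in \ms{H}$. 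Hence no interaction couples two pooled coordinates, $\log \pi_{\bfi}$ is additively separable on $A$, \eqref{eq:lvind1} holds, and taking $\bigwedge_{H \in \ms{H}}$ yields \eqref{eq:tighter}.

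For the sharper bound \eqref{eq:equality} I would refine this partition for a fixed $l \in V$. The key observation is that on any block in which every variable other than $l$ is frozen to a singleton, $y_l$ is the only variable that varies, so \eqref{eq:lvind1} holds trivially regardless of how $y_l$ is treated; I can therefore merge the $|H_l|+1$ blocks that differ only in the $\m P_l$-coordinate into the single block $\{\text{frozen config}\} \times \m I_l$, while all other blocks are retained and still satisfy \eqref{eq:lvind1} by the argument above. Since each $j \in W_l$ has $|H_j| = d-1$, its pooled block is already a singleton and such variables are automatically frozen, so the mergeable configurations are exactly those in which every $j \in \bar W_l \setminus \{l\}$ also sits at a singleton. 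Counting these $\prod_{j \in W_l}(|H_j|+1)\prod_{j \in \bar W_l \setminus \{l\}}|H_j|$ configurations and noting that each merge saves $|H_l|$ blocks gives the total saving $\prod_{j \in W_l}(|H_j|+1)\prod_{j \in \bar W_l}|H_j|$, which is exactly the subtracted term; minimizing over $l$ and $H$ then gives the upper bound in \eqref{eq:equality}.

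The remaining, and I expect hardest, task is tightness: showing no smaller bound holds uniformly over weakly hierarchical models. For this I would exhibit a weakly hierarchical $\pi$ whose nonnegative PARAFAC rank equals the right-hand side of \eqref{eq:equality} and prove a matching lower bound. The machinery above supplies the upper inequality; the obstacle is the lower bound, since $\rnkp$ is not pinned down by any single matrix rank. I would attack this by choosing the interactions so that a suitable flattening or family of two-dimensional slices of $\pi$ has nonnegative matrix rank equal to the claimed value --- using that for nonnegative matrices the nonnegative rank dominates the ordinary rank --- and then lift this slicewise lower bound to $\rnkp(\pi)$ through the latent-variable characterization of Remark \ref{rem:limcomon}.
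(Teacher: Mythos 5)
Your proposal is correct and follows essentially the same route as the paper: construct the product partition with singletons on $H_j$ and a pooled block $\bar H_j$, use weak hierarchicality together with $H \in \ms{H}$ to rule out any nonzero interaction coupling two pooled coordinates (the paper verifies the resulting conditional independence by an explicit term-by-term matching of probability ratios, which your ``additive separability of $\log\pi$ on the block'' argument packages more compactly), and then obtain \eqref{eq:equality} by merging the blocks that are singletons in all coordinates but $l$, with exactly the same count of mergeable configurations. Your tightness discussion is also at the same level as the paper's, which establishes achievability only through a concrete low-dimensional example with a matrix-rank lower bound rather than a general argument.
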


\begin{remark}\label{rem:msH}
By definition, $T_{(C_{\theta},H)} \subset C_{\theta}$, hence the condition $T_{(C_{\theta},H)} = C_{\theta}$ in the definition of $\ms{H}$ in \eqref{eq:msH} equivalently requires that for every $(E, \bfi_E) \in C_{\theta}$, $i_j \in H_j$ for some $j \in E$. Moreover, for weakly hierarchical models, $T_{(C_{\theta},H)} = C_{\theta} \Leftrightarrow T_{(C_{\theta,2},H)} = C_{\theta,2}$.
\end{remark}

\begin{remark}\label{rem:VU}
Theorem \ref{thm:rankgeneraltight} assumes $ C_{\theta}^{(j)} \neq \emptyset$ for all $j$, i.e., every variable shares at least one second order interaction. Clearly, the set of variables which do not satisfy the condition are marginally independent of all other variables and do not contribute to the rank. Letting $U = \{j : C_{\theta}^{(j)} = \emptyset\}$, the statement of Theorem \ref{thm:rankgeneraltight} will continue to hold without this assumption as long as we replace all instances of $V$ by $V^* = V \setminus U$. 
\end{remark}

The main strategy of proving Theorem \ref{thm:rankgeneraltight} is once again to carefully construct a partition $\m P$ of $\m I_V$ and define $z$ as in \eqref{eq:lat}. For the first part of Theorem \ref{thm:rankgeneraltight}, we construct $\m P$ as in \eqref{eq:part}, with $\m P_j$ consisting of the singleton sets $\{c_j\}$ for $c_j \in H_j$ and the set $\bar{H}_j = \m I_j \backslash H_j$. It is then immediate that $|\m P_j| = |H_j| + 1$ and hence $|\m P| = \prod_{j=1}^p (|H_j| + 1)$. Denoting this partition generated by $H$ to be $\m P_H^0$ for future reference, the non-trivial part is to show that for any $H \in \ms{H}$, $y_1, \ldots, y_p$ are conditionally independent given any set $A$ in $\m P_H^0$.
The tight upper bound in the second part of Theorem \ref{thm:rankgeneraltight} exploits that certain sets in $\m P_H^0$ can be merged without sacrificing the conditional independence. Indeed, in all the examples where we could calculate the exact rank explicitly, the bound in \eqref{eq:equality} produces the exact rank. However, to show that \eqref{eq:equality} provides the exact rank, we need an additional condition; see Remark \ref{rem:eq} below.

\begin{remark}\label{rem:eq}
Let $\pi$ be a probability tensor corresponding to a weakly hierarchical log-linear model. Suppose for every $H \in \ms{H}$ for which there exists $H^* \in \ms{H}$ such that $H^*_j \subseteq H_j$ for every $j$, the smallest partition $\mc{A}_H^{\inf}$ satisfying (\ref{eq:lvind}) that can be formed from unions of the events in $\mc{A}_H^0$ satisfies $|\mc{A}_H^{\inf} | \ge |\mc{A}_{H^*}^{\inf}|$. Then (\ref{eq:equality}) gives the exact value of $\rnkp(\pi)$.
\end{remark}

The calculation of the expressions in \eqref{eq:tighter} and \eqref{eq:equality} can be more complicated than the bound in Theorem \ref{thm:rankgeneral}. Studying the computational complexity associated with calculating these expressions is left as a topic for further research. However, we can explicitly calculate these expressions in the setting of Example \ref{ex:boundtrivial} below and illustrate the improvement over Theorem \ref{thm:rankgeneral}. \footnote{Note that here and in several later examples, we assume that the main effects $\{\teie : |E| = 1\}$ are zero for convenience. While formally these models are not weakly hierarchical, the inclusion of nonzero main effects is irrelevant for the calculation of tensor ranks in these examples.}

\begin{example}\label{ex:boundtrivial}
Suppose $p = 2$ and $d_1 = d_2 = d$. Assume $\theta_{\{1,2\}}(2,c_2) \neq 0$ for all $c_2 \geq 2$, $\theta_{\{1,2\}}(c_1,2) \neq 0$ for all $c_1 \geq 2$ and $\theta_{\{1,2\}}(c_1,c_2) = 0$ otherwise. Thus, level $2$ of variable $1$ interacts with all levels except $1$ of variable $2$, and similarly, level $2$ of variable $2$ interacts with all levels except $1$ of variable $1$. In addition, for convenience of illustration, also assume that all main effects are zero, whence 
$$ \log \pi_{i_1 i_2} = \theta_0 + \theta_{\{1,2\}}(i_1,i_2) \ind_{(i_i \geq 2 \text{ or } i_2 \geq  2)}. $$
Letting $J_d$ denote the $d \times d$ matrix of all ones, we can write $\pi = e^{\theta_0} J_d + \tilde{\pi}$, where $\tilde{\pi}$ is a $d \times d$ non-negative matrix with all entries except for the second row or column equaling zero,
$$\tilde{\pi}_{i_1 i_2} = e^{\theta_{\{1,2\}}(i_1,i_2)} \ind_{(i_i \geq 2 \text{ or } i_2 \geq  2)}.$$
In case of nonnegative matrices, $\rnkp(A)$ equals the ordinary matrix rank $\mbox{rnk}(A)$ when $\mbox{rnk}(A) \leq 2$ (see \cite{gregory1983semiring}). It is easy to see that the ordinary matrix rank of $\tilde{\pi}$ is $2$, since there are at most two linearly independent columns. Hence, $\rnkp(\tilde{\pi}) = 2$ and 
applying Lemma \ref{lem:hadamard} in the Appendix, we conclude $\rnkp(\pi) \leq 1 + \rnkp(\tilde{\pi}) \leq 3$. Barring pathological cases, the ordinary rank $\mbox{rnk}(\pi)$ will always be $3$, and since $\rnkp(A) \geq \mbox{rnk}(A)$ for matrices (\cite{cohen1993nonnegative}), $\rnkp(\pi)$ will also be exactly $3$. 

In applying Theorem \ref{thm:rankgeneral}, we have $|B_1| = |B_2| = d-1$, so that we always get the trivial upper bound $d$ irrespective of the choice of $\sigma$. 

Next, apply Theorem \ref{thm:rankgeneraltight}. Observe that $H = \{\{2\}, \{2\}\} \in \ms{H}$, since all of the interaction terms have either $c_1 = 2$ or $c_2 = 2$, and hence the upper bound in \eqref{eq:tighter} is reduced to $4$ irrespective of the value of $d$. With this choice of $H$, the expression inside the minimum in \eqref{eq:equality} becomes $(|H_1|+1)(|H_2|+1) - |H_1||H_2| = 4-1 =3$, which returns the exact rank.  


\end{example}

Although a detailed proof of Theorem \ref{thm:rankgeneraltight} is provided in the appendix, we highlight the salient features of the proof through the following Example \ref{ex:illustrate}, which is an extension of Example \ref{ex:boundtrivial} to higher dimensions with a more complicated interaction structure. 
\begin{example}\label{ex:illustrate}
Suppose $p = 3$, $d\geq 2$ arbitrary, and the non-zero interactions are
\begin{align*}
&\theta_{\{1,2\}}(2, c_2) \ne 0 \text{ for all } c_2 \ge 2, \,\theta_{\{2,3\}}(2,c_3) \ne 0 \text{ for all } c_3 \ge 2, \,  \\
&\theta_{\{1,3\}}(c_1,3) \ne 0 \text{ for all } c_1 \ge 2.
\end{align*}
As in the previous example, all (except level $1$) levels of all variables interact in a structured way. Theorem \ref{thm:rankgeneral} will again produce the trivial bound $d^2$ for all $3! = 6$ permutations. However, letting $H = \{\{2\}, \{2\}, \{2\}\}$, clearly $H \in \ms{H}$ and hence the PARAFAC rank is no more than $2^3 = 8$ by \eqref{eq:tighter}.

With this choice of $H$, consider the event $A = \{2\} \times \bar{H}_2 \times \bar{H}_3$ in $\m P_H^0$, i.e., the event corresponding to $\{y_1 = 2, y_2 \neq 2, y_3 \neq 2\}$. Fix a cell $\bfi = (2, 4, 4)$. We show that \eqref{eq:lvind1} holds with $\bfi$ and $A$, i.e.,
\begin{align}\label{eq:ci_cond}
Pr(y_1 = 2, y_2 = 4, y_3 = 4 \mid A) = Pr(y_1 = 2 \mid A) Pr(y_2 = 4 \mid A) Pr(y_3 = 4 \mid A)
\end{align}
Following our convention of identifying sets in $\m Y$ and $\m I_V$, let $A^* = \{2\} \times \{4\} \times \{4\} \in \m I_V$ denote the set corresponding to $\{y_1 = 2, y_2 = 4, y_3 = 4\}$. Clearly, $A^* \subset A$, and hence $Pr(A^* \mid A) = Pr(A^*)/Pr(A)$ in the left hand side of \eqref{eq:ci_cond}. 

Observe that $Pr(y_1 = 2 \mid A) = 1$. Next,
\begin{align*}
Pr(y_2 = 4 \mid A) = \sum_{c_3 \neq 2} \frac{\pi_{2 4 c_3}}{Pr(A)} = Pr(A^* \mid A) \sum_{c_3 \neq 2} \frac{\pi_{2 4 c_3}}{\pi_{2 4 4}}.
\end{align*}
Similarly,
\begin{align*}
Pr(y_3 = 4 \mid A) = \sum_{c_2 \neq 2} \frac{\pi_{2 c_2 4}}{Pr(A)} = Pr(A^* \mid A) \sum_{c_2 \neq 2} \frac{\pi_{2 c_2 4}}{\pi_{2 4 4}}.
\end{align*}
Thus, \eqref{eq:ci_cond} is equivalent to showing
\begin{align}\label{eq:ci_cond_eq}
\frac{Pr(A)}{Pr(A^*)} = \sum_{c_2 \neq 2} \sum_{c_3 \neq 2} \frac{\pi_{2 4 c_3}}{\pi_{2 4 4}} \frac{\pi_{2 c_2 4}}{\pi_{2 4 4}}.
\end{align}
Next, by definition, 
\begin{align}\label{eq:ci_cond_eq1}
\frac{Pr(A)}{Pr(A^*)} = \sum_{c_2 \neq 2} \sum_{c_3 \neq 2} \frac{\pi_{2 c_2 c_3}}{\pi_{2 4 4}} = \sum_{c_2 \neq 2} \sum_{c_3 \neq 2} \frac{\pi_{2 c_2 c_3}}{\pi_{2 c_2 4}} \frac{\pi_{2 c_2 4}}{\pi_{2 4 4 }}.
\end{align}
To complete the argument, compare the right hand sides of \eqref{eq:ci_cond_eq} and \eqref{eq:ci_cond_eq1}, and note that for any $c_2 \neq 2$, 
\begin{align*}
\frac{\pi_{2 4 c_3}}{\pi_{2 4 4}} &= 
\frac{ \exp\{\theta_0 + \theta_{\{1,2\}}(2,4) + \theta_{\{1,3\}}(2,c_3) \} }{ \exp\{\theta_0 + \theta_{\{1,2\}}(2,4) + \theta_{\{1,3\}}(2,4) \}}  \\
&=\frac{ \exp\{\theta_0 + \theta_{\{1,2\}}(2,c_2) + \theta_{\{1,3\}}(2,c_3) \} }{ \exp\{\theta_0 + \theta_{\{1,2\}}(2,c_2) + \theta_{\{1,3\}}(2,4) \}}
= \frac{\pi_{2 c_2 c_3}}{\pi_{2 c_2 4}}
\end{align*}

\medskip
We now provide the main intuition behind the expression in \eqref{eq:equality}. Fix $l = 3$. Observe that the partition $\m P_H^0$ contains the sets $\{2\} \times \{2\} \times \{2\}$ and $\{2\} \times \{2\} \times \{1, 3, \ldots, d\}$. Merge these two sets to create a new partition which now has $7$ elements instead of the $8$ in $\m P_H^0$. Following the argument in the display after \eqref{eq:part}, we have conditional independence given $\{2\} \times \{2\} \times \m I_3$. Since this is the only set in the new partition that is not in $\m P_H^0$, the new partition satisfies \eqref{eq:lvind1}. Therefore, we see that it is possible to merge certain sets in $\m P_H^0$ to create a coarser partition that continues to satisfy \eqref{eq:lvind1}. More specifically, we merge those sets which have $(|V| - 1)$ coordinate projections that are singleton sets; see the Appendix for the proof in the general setting. 
\end{example}

Example \ref{ex:illustrate} is a simple yet non-trivial example of the general principles underlying Theorem \ref{thm:rankgeneraltight}. We provide another example with greater degree of complexity in the supplemental document. 

The expansions in Theorems \ref{thm:rankgeneral} and \ref{thm:rankgeneraltight} consist of nearly sparse tensors, a somewhat surprising result.  The latent variable $z$ in Theorem \ref{thm:rankgeneraltight} is defined by events of the form $\{i_j\}$ and $H_j^c$. Conditioning on events of the form $\{i_j\}$ results in the $j^{th}$ arm of the corresponding term of the PARAFAC expansion consisting of a single $1$ in the $i_j$ entry and zeros elsewhere. As a result, when the true tensor is sparse, the PARAFAC expansions implicit in Theorems \ref{thm:rankgeneral} and \ref{thm:rankgeneraltight} consist of many nearly sparse terms and a few nearly dense terms that arise from the events $H_j^c$. This means that sparsity in the log-linear parametrization implies both reduced PARAFAC rank and sparsity in the tensor arms, an observation that we develop further in section \ref{sec:applications}.

\section{Additional rank results and corollaries} 
We now provide a number of corollaries to Theorem \ref{thm:rankgeneral} that provide insight into cases where a relatively low PARAFAC rank can be expected. These results make some additional assumptions about the support of the log linear model. As a basis for comparison across the different cases, we will consider a sequence of ``true'' models given by their parameter vectors $\bs{\theta}_n$ and corresponding probability tensors $\pi_n$, where either $d_n \to \infty$, $p_n \to \infty$, or both, representing cases of increasing dimension of $\pi$, and determine what these settings imply about the growth of the PARAFAC rank when the number of nonzero parameters in the log-linear model grows at a logarithmic rate in $p$ or $d$. This is a common paradigm in high-dimensional asymptotics (see e.g. \cite{nardi2012log}) and provides a rough indication of the extent of dimension reduction that is achievable with PARAFAC decompositions in different cases. 

\subsection{Corollaries of PARAFAC rank results} \label{subsec:rankcors}

Corollary \ref{cor:fewlevels} shows that when the maximum number of interacting levels of all variables is small relative to $d$ the rank will be substantially reduced. 
\begin{corollary} \label{cor:fewlevels}
If $\mid \intlevj{j} \mid < m-1$ for all $j$, $\rnkp(\pi) < m^{p-1}$.
\end{corollary}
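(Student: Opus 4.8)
The plan is to read this off directly from Theorem \ref{thm:rankgeneral} by evaluating its bound at a single, arbitrarily chosen permutation and then controlling each factor in the product. Since the bound in Theorem \ref{thm:rankgeneral} is a minimum over all permutations $\sigma$ of $V$, it suffices to exhibit one permutation for which the product $\prod_{j=1}^{p-1}(|B_{\sigma(j)}| + 1)$ is strictly below $m^{p-1}$; I would simply take $\sigma$ to be the identity (the choice is immaterial).

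The key observation is the set containment $B_{\sigma(j)} \subseteq \intlevj{\sigma(j)}$. Indeed, comparing the definition of $B_{\sigma(j)}$ in Theorem \ref{thm:rankgeneral} with the definition of $\intlevj{j}$ given just before it, $B_{\sigma(j)}$ collects exactly those levels $i_{\sigma(j)}$ that have a nonzero two-way interaction with some variable $f$ lying \emph{later} in the ordering (i.e. $f \in G_\sigma^{(j)}$), whereas $\intlevj{\sigma(j)}$ collects the levels having a nonzero two-way interaction with \emph{any} other variable. The former condition is more restrictive, so $B_{\sigma(j)} \subseteq \intlevj{\sigma(j)}$ and hence $|B_{\sigma(j)}| \le |\intlevj{\sigma(j)}|$. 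By hypothesis $|\intlevj{j}| < m-1$ for every $j$; since these are integers this gives $|B_{\sigma(j)}| \le m-2$, and therefore $|B_{\sigma(j)}| + 1 \le m-1$ for each $j = 1,\ldots,p-1$.

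Combining these factorwise bounds with Theorem \ref{thm:rankgeneral} yields
\begin{align*}
\rnkp(\pi) \;\le\; \prod_{j=1}^{p-1}\bigl(|B_{\sigma(j)}| + 1\bigr) \;\le\; (m-1)^{p-1} \;<\; m^{p-1},
\end{align*}
where the final strict inequality uses $m-1 < m$ together with the fact that $x \mapsto x^{p-1}$ is strictly increasing on the positive reals (for $p \ge 2$; the degenerate case $p=1$ carries no interactions and the claim is vacuous). This completes the argument. There is no genuine obstacle here — the entire content is the containment $B_{\sigma(j)} \subseteq \intlevj{\sigma(j)}$, which reduces the corollary to a one-line substitution into Theorem \ref{thm:rankgeneral}; the only point requiring a moment's care is the passage from the non-strict factorwise bound $(m-1)$ to the strict global bound $m^{p-1}$, which is secured precisely because each factor is bounded by $m-1$ rather than $m$.
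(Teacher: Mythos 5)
Your proof is correct and follows essentially the same route as the paper's: both arguments rest on the containment $B_{\sigma(j)} \subseteq \intlevj{\sigma(j)}$, which turns the hypothesis $|\intlevj{j}| < m-1$ into the factorwise bound $|B_{\sigma(j)}|+1 \le m-1$ and hence $\rnkp(\pi) < m^{p-1}$ via Theorem \ref{thm:rankgeneral}. You merely spell out the integer-valuedness and the strictness of the final inequality, which the paper leaves implicit.
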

\begin{proof}
This follows immediately from Theorem \ref{thm:rankgeneral} by noting that the condition $\mid \intlevj{j} \mid < m-1$ implies that $\mid B_{\sigma(j)} \mid < m-1$ for every permutation $\sigma$ and every $j$.
\end{proof}
In the case where $m \ll d$, the condition in corollary \ref{cor:fewlevels} reduces the PARAFAC rank by a factor of $(d/m)^{p-1}$. In the setting where $d_n \to \infty$, with $m_n \asymp \log d_n$, we have $\rnkp(\pi_n) \asymp \log(d_n)^p$, and thus this condition would give asymptotically low-rank expansions when $d_n \to \infty$ with $p_n$ fixed. However, the PARAFAC rank still grows exponentially in $p_n$, so this assumption is unhelpful in controlling the growth of the PARAFAC rank when $p_n \to \infty$. By Theorem \ref{thm:rankgeneraltight}, the exact rank will also grow exponentially at the rate $p_n-1$, so in general the order of growth in $p_n$ of the exact PARAFAC rank is the same as that given by corollary \ref{cor:fewlevels}, which relies on Theorem \ref{thm:rankgeneral}.

If we also assume that certain types of conditional independence exist, useful bounds on the rate of growth in the PARAFAC rank in both $d$ and $p$ can be obtained. Corollary \ref{cor:condind} gives one such result.
\begin{corollary} \label{cor:condind}
Suppose that the conditions in corollary \ref{cor:fewlevels} hold and for $J \subset V$, set $y_{(J)} = \{y_j : j \in J\}$. Then if $y_{(J^c)}$ are independent given the variables $y_{(J)}$, $\rnkp(\pi) \le m^{|J|}$.
\end{corollary}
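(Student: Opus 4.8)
The plan is to obtain the bound as a direct consequence of Theorem \ref{thm:rankgeneral}, by choosing the permutation $\sigma$ so that the conditional independence hypothesis forces the contribution of every variable in $J^c$ to collapse to a trivial factor. Concretely, I would order $V$ so that all variables of $J$ appear before all variables of $J^c$, and then read off the bound from the product in Theorem \ref{thm:rankgeneral}.

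The one genuinely new ingredient is an algebraic reformulation of the hypothesis. First I would argue that, because $\pi$ arises from a log-linear model, the assertion that $y_{(J^c)}$ are mutually independent given $y_{(J)}$ is equivalent to the vanishing of every interaction coupling two or more variables of $J^c$; that is, $\teie = 0$ for all $\bfi_E$ whenever $|E \cap J^c| \ge 2$. This follows by fixing $y_{(J)} = b$, expanding the conditional log-mass function of $y_{(J^c)}$ through \eqref{eq:loglineargen}, and observing that mutual conditional independence is exactly additive separability of this conditional log-density across the coordinates indexed by $J^c$; under the corner parametrization, and since separability must hold simultaneously for all $b$, the only terms that can couple two coordinates of $y_{(J^c)}$, namely those $\teie$ with $|E \cap J^c| \ge 2$, are forced to be zero. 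In particular there is no nonzero two-way interaction $\theta_{\{j,j'\}}(\cdot,\cdot)$ with $j,j' \in J^c$.

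With this in hand I would fix a permutation $\sigma$ listing the elements of $J$ first and those of $J^c$ last, and apply Theorem \ref{thm:rankgeneral}. For a position $j$ with $\sigma(j) \in J^c$, the look-ahead set $G_{\sigma}^{(j)} = \{\sigma(j+1),\ldots,\sigma(p)\}$ consists entirely of variables in $J^c$; by the previous paragraph no level of $\sigma(j)$ shares a nonzero two-way interaction with any $f \in G_{\sigma}^{(j)}$, so $B_{\sigma(j)} = \emptyset$ and the factor $(|B_{\sigma(j)}|+1)$ equals $1$. For a position $j$ with $\sigma(j) \in J$, we have $B_{\sigma(j)} \subseteq \intlevj{\sigma(j)}$, so the hypothesis $|\intlevj{j}| < m-1$ of Corollary \ref{cor:fewlevels} gives $|B_{\sigma(j)}|+1 \le m-1$. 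Since at most $|J|$ of the $p-1$ factors exceed $1$, Theorem \ref{thm:rankgeneral} yields
\begin{align*}
\rnkp(\pi) \;\le\; \prod_{j=1}^{p-1}\bigl(|B_{\sigma(j)}|+1\bigr) \;\le\; (m-1)^{|J|} \;<\; m^{|J|},
\end{align*}
which is the claimed bound.

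The step I expect to be the main obstacle is the algebraic reformulation in the second paragraph. Translating the probabilistic statement of mutual conditional independence into the exact vanishing of the interaction parameters requires care with the corner parametrization: one must verify that additive separability of the conditional log-density, imposed for every configuration of $y_{(J)}$, eliminates all interactions that straddle two variables of $J^c$, rather than merely some aggregate of them. This is the standard log-linear Markov-property computation; once it is established, the remainder is a routine bookkeeping application of Theorem \ref{thm:rankgeneral} with the chosen ordering.
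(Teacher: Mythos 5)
Your proof is correct and follows essentially the same route the paper intends: the paper leaves this corollary unproved except for the remark that the conditional independence hypothesis forces every edge to involve a variable in $J$, after which Theorem \ref{thm:rankgeneral} is applied with the $J$-variables ordered first so that $B_{\sigma(j)} = \emptyset$ for $\sigma(j) \in J^c$. Your additional care in deriving the vanishing of all $\teie$ with $|E \cap J^c| \ge 2$ from the corner parametrization is a sound and welcome elaboration of that implicit step, and the final bound $(m-1)^{|J|} < m^{|J|}$ is consistent with how Corollary \ref{cor:fewlevels} is stated.
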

The simplest such case is represented graphically by example 1 in figure \ref{fig:graphs}: a single star-graph, where $y_7$ is the hub variable. More generally, the setting in corollary \ref{cor:condind} has a graphical representation where all edges involve at least one of the variables in $J$. The PARAFAC rank then grows exponentially in $|J|$, not $p$. If we take $d_n \to \infty$ and $p_n \to \infty$ with $m_n \asymp \log(d_n)$ and $|J_n| \asymp \log(p_n)$, we obtain $\rnkp(\pi_n) \asymp \log(d_n)^{\log p_n}$, so the growth in the rank slows to exponential in $\log p_n$.  

Similar bounds can be obtained when marginal independence exists, which is represented graphically in Example 2 in figure \ref{fig:graphs} and formalized in corollary \ref{cor:margind}. 
\begin{corollary} \label{cor:margind}
Suppose the conditions of corollary \ref{cor:fewlevels} hold, and suppose $J \subset V$ with $j \in J^c \Rightarrow y_j \ci y_{[-j]}$, and $|J|<p$. Then $\rnkp(\pi) \le m^{(|J|)}$.
\end{corollary}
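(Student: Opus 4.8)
The plan is to reduce Corollary \ref{cor:margind} to Corollary \ref{cor:condind}, whose hypotheses are strictly weaker. Both corollaries already assume the conditions of Corollary \ref{cor:fewlevels}, so the only thing to check is that the marginal-independence assumption made here implies the conditional-independence hypothesis required there; once that is done the conclusion $\rnkp(\pi) \le m^{|J|}$ is immediate.

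First I would show that the per-variable marginal independences $y_j \ci y_{[-j]}$, $j \in J^c$, together force the global factorization
\[
\pi = \Big(\prod_{j \in J^c} \Pb(y_j)\Big)\,\Pb(y_{(J)}).
\]
This follows by peeling off the variables in $J^c$ one at a time: $y_j \ci y_{[-j]}$ implies $y_j$ is independent of every sub-collection of the remaining variables, and such independence persists when the already-removed coordinates are marginalized out, so sequential conditioning produces the displayed product. From this factorization, $\Pb(y_{(J^c)} \mid y_{(J)}) = \prod_{j \in J^c}\Pb(y_j) = \prod_{j \in J^c}\Pb(y_j \mid y_{(J)})$, i.e.\ the variables $y_{(J^c)}$ are conditionally independent given $y_{(J)}$. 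Corollary \ref{cor:condind} then applies verbatim and yields $\rnkp(\pi) \le m^{|J|}$.

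Alternatively, and in a way that mirrors the proofs of Corollaries \ref{cor:fewlevels} and \ref{cor:condind}, one can argue directly from Theorem \ref{thm:rankgeneral}. The factorization above, combined with uniqueness of the corner parametrization, forces every interaction term involving a coordinate $j \in J^c$ to vanish; in particular $C_{\theta}^{(j)} = \emptyset$ for all $j \in J^c$, so $B_{\sigma(j)} = \emptyset$ for such $j$ under every permutation. Choosing $\sigma$ that lists the $|J|$ variables of $J$ in the first $|J|$ positions and the variables of $J^c$ last -- possible precisely because $|J| < p$ guarantees a nonempty $J^c$ to occupy the final, dropped slot -- the product in Theorem \ref{thm:rankgeneral} collapses to $\prod_{j=1}^{|J|}(|B_{\sigma(j)}|+1)$. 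The condition $|C_{\theta}^{(j)}| < m-1$ gives $|B_{\sigma(j)}|+1 \le m-1$, so this product is at most $(m-1)^{|J|}$, which does not exceed $m^{|J|}$ and is in fact a mild sharpening of the stated bound.

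The main thing to get right is the translation from the per-variable marginal independences to a statement the argument can actually consume -- either the full conditional-independence factorization feeding Corollary \ref{cor:condind}, or the vanishing of all interactions touching $J^c$ feeding Theorem \ref{thm:rankgeneral}. Neither step is deep, but both rest on the point that $y_j \ci y_{[-j]}$ holding for \emph{each} $j \in J^c$ separately upgrades to a single product factorization of the whole tensor (and, for the second route, additionally on uniqueness of the log-linear parametrization). The hypothesis $|J| < p$ does no real work beyond ensuring $J^c \neq \emptyset$, so that the statement is non-vacuous and, in the direct argument, that a $J^c$ variable is available for the last position.
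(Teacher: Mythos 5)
Your proposal is correct, and your second (direct) argument is essentially the paper's intended one: the paper prints no proof of Corollary \ref{cor:margind}, but Remark \ref{rem:VU} and the surrounding discussion make clear the authors view it as immediate from Theorem \ref{thm:rankgeneral} because every variable in $J^c$ has no nonzero interactions (hence $B_{\sigma(j)}=\emptyset$ and a factor of $1$), exactly as you argue, down to the observation that $|J|<p$ lets a $J^c$ variable occupy the dropped final slot. Your first route, upgrading the per-variable marginal independences to a full product factorization and then invoking Corollary \ref{cor:condind}, is a valid alternative reduction that the paper does not make explicit, and your sharpened bound $(m-1)^{|J|}$ is consistent with the strict inequality already present in Corollary \ref{cor:fewlevels}.
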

Thus, under marginal independence, the PARAFAC rank will depend only on the number of variables that are not marginally independent; the same result that we obtained in corollary \ref{cor:condind} with conditional independence replaced by marginal independence. It follows we can also achieve the $\log(d_n)^{\log p_n}$ growth in the PARAFAC rank with the same assumptions on the growth of $m_n$ and $|J_n|$. We now show that a slight variation on the PARAFAC decomposition can eliminate the exponential factor of $\log(p_n)$ in cases where there are multiple marginally independent cliques with no separators. 

\subsection{Independent PARAFACs}
We first define a modified PARAFAC model. Divide $y_1,\ldots,y_p$ into $k$ groups, and let $s_j$ indicate the group membership of variable $j$. For each $s \in \{1,\ldots,k\}$ define a PARAFAC expansion for the marginal probability tensor corresponding to $\pi_s = Pr(\{ y_j  : s_j = s\})$, as
\begin{align*}
\pi^{(s)} = \sum_{h=1}^{m_s} \nu_{sh} \bigotimes_{j: s_j = s} \lambda^{(j)}_h.
\end{align*}
We define the joint distribution of $y_1,\ldots,y_p$ as 
\begin{align*}
\pi_{c_1,\ldots,c_p} = \prod_{s=1}^k \prod_{j : s_j = s} \pi^{(s)}_{c_j}.
\end{align*}
This model can be described succinctly as $k$ independent PARAFACs. This is a generalization of the sparse PARAFAC (sp-PARAFAC) model of \cite{zhou2013bayesian} to the case of more than two groups. This model is ideally suited to the case of a graphical structure with $k$ cliques and no separators, and gives much stronger control over parameter growth when the truth has such a structure, as shown in Theorem \ref{thm:indparafac}

\begin{theorem} \label{thm:indparafac}
Consider a sequence of hierarchical log-linear models for binary data defined by parameters $\bs{\theta}_n$, where $p_n \to \infty$. Let $\ms{F}_n$ be the collection of all cliques in the graphical representation of the model, and suppose $\mid \ms{F}_n \mid \asymp k_n$. Then if $\bigvee_{ F \in \ms{F}_n } \mid F \mid \asymp \log_2(p_n)$ and $F \cap F^* = \emptyset$ for all $F, F^* \in \ms{F}_n$ with $F \ne F^*$, the tensor $\pi_n$ can be expressed asymptotically by $k_n$ independent tensors $\pi_n^{(1)},\ldots,\pi_n^{(k_n)}$ with $\sum_{s=1}^{k_n} \rnkp(\pi_n^{(s)}) \asymp k_n p_n$.
\end{theorem}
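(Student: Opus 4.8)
The plan is to reduce the theorem to two essentially separate facts: that the disjoint-clique structure forces $\pi_n$ to factor exactly into a product of marginal tensors, one per clique, and that each such marginal has nonnegative PARAFAC rank of order $p_n$ because its number of (binary) modes is $\lesssim \log_2 p_n$.

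First I would establish the factorization working directly from the log-linear parametrization \eqref{eq:loglineargen} rather than from abstract Markov properties. Since the model is hierarchical, any nonzero interaction $\theta_E$ forces $\theta_{\{j,j'\}} \neq 0$ for every pair $\{j,j'\} \subseteq E$, so the support $E$ induces a complete subgraph and hence lies inside some clique. As the cliques in $\ms{F}_n$ are pairwise disjoint, each such $E$ is contained in a unique clique $F_s$, and \eqref{eq:loglineargen} decomposes additively as $\log \pi_{\bfi} = \theta_{\emptyset} + \sum_{s=1}^{k_n} \sum_{\emptyset \neq E \subseteq F_s} \teie$. Exponentiating gives $\pi_{\bfi} = e^{\theta_{\emptyset}} \prod_{s=1}^{k_n} g_s(\bfi_{F_s})$ for functions $g_s$ depending only on the coordinates in $F_s$; since these coordinate blocks are disjoint, after normalization this is precisely a product of the clique marginals, so $y_{(F_1)}, \ldots, y_{(F_{k_n})}$ are mutually independent and $\pi_n = \prod_s \pi_n^{(s)}$. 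This is exactly the $k_n$-independent-PARAFAC representation introduced above, and the first claim of the theorem holds (not merely asymptotically).

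Next I would bound the rank of each factor. Each $\pi_n^{(s)}$ is a nonnegative binary tensor of order $|F_s|$, so the trivial bound gives $\rnkp(\pi_n^{(s)}) \le 2^{|F_s|-1}$. Because $|F_s| \le \bigvee_{F \in \ms{F}_n} |F| \asymp \log_2 p_n$, each factor has rank $\lesssim 2^{\log_2 p_n - 1} \asymp p_n$, and summing over the $\asymp k_n$ cliques yields $\sum_{s=1}^{k_n} \rnkp(\pi_n^{(s)}) \lesssim k_n p_n$. I would also highlight the payoff by contrast: a single joint PARAFAC of the full tensor $\pi_n$ typically requires rank of order $\prod_s \rnkp(\pi_n^{(s)})$, exponential in $k_n$, whereas the independent representation is only linear in $k_n$.

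The hard part will be the matching lower bound needed to turn $\lesssim$ into $\asymp$, since lower bounds on nonnegative PARAFAC rank are notoriously delicate. The tool I would use is that $\rnkp$ dominates the ordinary rank of any matrix unfolding of the tensor, exactly as $\rnkp(A) \ge \mbox{rnk}(A)$ was used for matrices in Example \ref{ex:boundtrivial}. Because a clique in a hierarchical model has all its interactions active, $\pi_n^{(s)}$ is ``dense'' enough that a balanced unfolding has rank of order $2^{|F_s|}$, so cliques of size of order $\log_2 p_n$ each contribute rank of order $p_n$; provided a fixed fraction of the $k_n$ cliques are of this maximal order (which is what the balanced hypothesis $\bigvee_F |F| \asymp \log_2 p_n$ is meant to encode), these contributions sum to $\gtrsim k_n p_n$ and complete the $\asymp$. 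Controlling this unfolding rank uniformly, and making precise the sense in which the clique sizes are balanced, is where the real work lies.
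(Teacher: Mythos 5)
Your approach is the same as the paper's: factor $\pi_n$ into independent clique marginals and bound each marginal's rank by the trivial bound $2^{|F_s|-1}\lesssim p_n$, then sum over the $k_n$ cliques. Your first step is in fact more explicit than the paper's, which simply asserts the Hadamard-product representation; your derivation from hierarchicality (nonzero $\theta_E$ forces all pairwise interactions within $E$, so each supported $E$ sits inside a unique clique, and the disjointness of cliques makes the log-probability additive across cliques) is exactly the right justification and is worth having on record. On the point you flag as "the real work": the paper's proof does not carry out a lower bound either. It counts parameters per clique ($2^{|F|}\approx p_n$ of them) and then asserts $\rnkp(\pi_n^{(l)})\asymp \mc{O}(p_n)$, which is only an upper bound; so you have not missed a hidden argument, and both your proof and the paper's really establish $\sum_s \rnkp(\pi_n^{(s)})\lesssim k_n p_n$, with the $\asymp$ in the statement left unsubstantiated. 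Your proposed route to a matching lower bound via ordinary ranks of unfoldings is a sensible one, but note the additional wrinkle that the theorem's hypothesis controls $\bigvee_{F\in\ms{F}_n}|F|$ (the largest clique) while the paper's proof invokes $\bigwedge_{F\in\ms{F}_n}|F|$ (the smallest); only the latter, together with genuine fullness of the interactions within each clique, could force a constant fraction of the cliques to contribute rank of order $p_n$, which is the balancedness you correctly identify as necessary.
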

An important conclusion is that in cases where the dependence has a particular structure, grouping variables and performing independent PARAFAC decompositions for each of the marginal probability tensors corresponding to the groups can reduce the effective number of parameters drastically and eliminate exponential scaling in $p_n$ altogether. The approach outlined above is limited to cases in which the graph has no separators -- i.e. the cliques are marginally independent. However, additional flexibility could be gained by introducing another set of parameters to control dependence between the groups, an approach that we ultimately propose through the collapsed Tucker model.

 \begin{figure}[ht]
\begin{tabular}{cccc}

\begin{tikzpicture}[scale=0.5,transform shape]
  \Vertex[x=0,y=0.5]{1}
  \Vertex[x=-1.5,y=1.5]{2}
  \Vertex[x=-1.5,y=3]{3}
  \Vertex[x=0,y=4]{4}
  \Vertex[x=1.5,y=3]{5}
  \Vertex[x=1.5,y=1.5]{6}
  \Vertex[x=0,y=2.25]{7}
  \tikzstyle{LabelStyle}=[fill=white,sloped]
  \Edge(1)(7)
  \Edge(2)(7)
  \Edge(3)(7)
  \Edge(4)(7)
  \Edge(5)(7)
  \Edge(6)(7)
  \draw (0,4.5) node[above]{$ \textsc{Ex. 1: Star graph}$};
\end{tikzpicture}

& \hspace{5mm} 
& \begin{tikzpicture}[scale=0.5,transform shape]
  \Vertex[x=0,y=0]{1}
  \Vertex[x=0,y=2]{2}
  \Vertex[x=2,y=2]{3}
  \Vertex[x=2,y=0]{4}
  \Vertex[x=4,y=1]{5}
  \Vertex[x=6,y=2]{6}
  \Vertex[x=6,y=0]{7}
  \tikzstyle{LabelStyle}=[fill=white,sloped]
  \Edge(1)(2)
  \Edge(2)(3)
  \Edge(3)(4)
  \Edge(4)(1)
  \Edge(1)(3)
  \Edge(2)(4)
  \draw (3,3) node[above]{$ \textsc{Ex. 2: Marginal independence}$};
\end{tikzpicture} 
& \\

\begin{tikzpicture}[scale=0.5,transform shape]
  \Vertex[x=0,y=0]{1}
  \Vertex[x=0,y=2]{2}
  \Vertex[x=2,y=2]{3}
  \Vertex[x=2,y=0]{4}
  \Vertex[x=4,y=1]{5}
  \Vertex[x=6,y=2]{6}
  \Vertex[x=6,y=0]{7}
  \tikzstyle{LabelStyle}=[fill=white,sloped]
  \Edge(1)(2)
  \Edge(2)(3)
  \Edge(3)(4)
  \Edge(4)(1)
  \Edge(1)(3)
  \Edge(2)(4)
  \Edge(5)(6)
  \Edge(6)(7)
  \Edge(7)(5)
  \draw (3,3) node[above]{$ \textsc{Ex. 3: Two separated cliques}$};
\end{tikzpicture} 
& \hspace{5mm} 
&\begin{tikzpicture}[scale=0.5,transform shape]
  \Vertex[x=0,y=0]{1}
  \Vertex[x=0,y=2]{2}
  \Vertex[x=2,y=2]{3}
  \Vertex[x=2,y=0]{4}
  \Vertex[x=4,y=1]{5}
  \Vertex[x=6,y=2]{6}
  \Vertex[x=6,y=0]{7}
  \tikzstyle{LabelStyle}=[fill=white,sloped]
  \Edge(1)(2)
  \Edge(2)(3)
  \Edge(3)(4)
  \Edge(4)(1)
  \Edge(1)(3)
  \Edge(2)(4)
  \Edge(4)(5)
  \Edge(5)(6)
  \Edge(6)(7)
  \Edge(7)(5)
  \draw (3,3) node[above]{$ \textsc{Ex. 4: Two cliques, one separator}$};
\end{tikzpicture} 
&\begin{tikzpicture}[scale=0.5,transform shape]
  \Vertex[x=0,y=0]{1}
  \Vertex[x=0,y=2]{2}
  \Vertex[x=2,y=2]{3}
  \Vertex[x=2,y=0]{4}
  \Vertex[x=4,y=2]{5}
  \Vertex[x=4,y=0]{6}
  \Vertex[x=6,y=2]{7}
  \Vertex[x=6,y=0]{8}
  \tikzstyle{LabelStyle}=[fill=white,sloped]
  \Edge(1)(2)
  \Edge(2)(3)
  \Edge(3)(4)
  \Edge(4)(1)
  \Edge(1)(3)
  \Edge(2)(4)
  \Edge(4)(5)
  \Edge(4)(6)
  \Edge(5)(6)
  \Edge(5)(7)
  \Edge(5)(8)
  \Edge(6)(7)
  \Edge(6)(8)
  \Edge(7)(8)
  \draw (3,3) node[above]{$ \textsc{Ex. 5: simulation example}$};
\end{tikzpicture}  \\

\end{tabular}
\caption{Example graphs} \label{fig:graphs}
\end{figure}
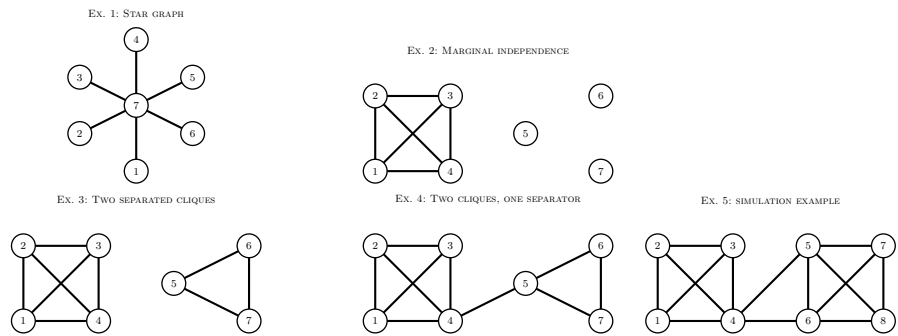

\subsection{Collapsed Tucker Models} \label{subsec:cTucker}
The results in section \ref{subsec:rnkproofsgen} and Theorem \ref{thm:indparafac} indicate that in many cases, grouping variables can substantially reduce the parameter complexity of tensor decompositions when the truth is a sparse log-linear model. In this section we propose a novel tensor factorization model that, like the independent PARAFACs discussed in section \ref{sec:generalpd}, groups variables, but is more flexible. A corollary to Theorem \ref{thm:rankgeneraltight} provides additional context and motivation for the proposed method.
\begin{corollary} \label{cor:tuckerrank}
If $\pi$ is a probability tensor corresponding to a sparse log-linear model then
\begin{align*}
\rnkt(\pi) \le \bigwedge_{H \in \ms{H}} \bigvee_{j \in V} (|H_j| + 1),
\end{align*} 
where $\ms{H}$ is the collection defined in the statement of Theorem \ref{thm:rankgeneraltight}.
\end{corollary}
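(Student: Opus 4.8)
The plan is to read off the Tucker factorization directly from the product partition already built in the proof of the first part of Theorem~\ref{thm:rankgeneraltight}. Fix $H \in \ms{H}$ and recall the partition $\m P_H^0 = \bigtimes_{j \in V} \m P_j$, where $\m P_j$ consists of the singletons $\{c_j\}$ for $c_j \in H_j$ together with the pooled block $\bar{H}_j = \m I_j \setminus H_j$, so that $|\m P_j| = |H_j| + 1$. For each $j$ let $z_j$ record which block of $\m P_j$ contains $y_j$, and set $z = (z_1, \ldots, z_p)$; then $z_j$ ranges over $|H_j| + 1$ values and the joint range of $z$ is exactly $\m P_H^0$. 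The aim is to realize $\pi$ in the Tucker form (\ref{eq:tucker}) generated by $z$, with core $\phi_{h_1 \ldots h_p} = Pr(z = (h_1, \ldots, h_p))$ and arms $\lambda^{(j)}_{h_j \cdot} = Pr(y_j = \cdot \mid z_j = h_j)$. Setting $m = \bigvee_{j \in V}(|H_j| + 1)$ and padding each mode with null (zero-probability) components up to $m$ levels would then produce an $m$-component Tucker decomposition; minimizing over $H \in \ms{H}$ recovers the bound in the statement.

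Two facts must hold for this to be a genuine instance of (\ref{eq:tucker}). First, conditionally on each block $A \in \m P_H^0$ the variables $y_1, \ldots, y_p$ must be independent; this is exactly the assertion, established in the first part of Theorem~\ref{thm:rankgeneraltight}, that $\m P_H^0$ satisfies (\ref{eq:lvind1}) whenever $H \in \ms{H}$, so it can be invoked directly. Second — and this is the step that separates the Tucker claim from the PARAFAC one — the conditional law of $y_j$ given the \emph{entire} vector $z$ must depend on $z$ only through its own coordinate $z_j$, so that a single arm $\lambda^{(j)}_{h_j}$ is well defined. When $z_j$ designates a singleton $\{c_j\}$ with $c_j \in H_j$ this is immediate, since then $y_j = c_j$ almost surely and $\lambda^{(j)}_{h_j} = \delta_{c_j}$ irrespective of $z_{-j}$. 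Given both facts, $\prod_j \lambda^{(j)}_{h_j c_j} = Pr(y_1 = c_1, \ldots, y_p = c_p \mid z = h)$, and summing against $\phi$ returns $\pi$.

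I expect the main obstacle to be the second requirement for the pooled coordinate, that is, showing $Pr(y_j = i_j \mid z)$ with $i_j \in \bar{H}_j$ is unaffected by $z_{-j}$. The natural tool is the log-linear expansion: for $i_j, i_j' \in \bar{H}_j$ the ratio $\pi_{i_j, \bfi_{-j}} / \pi_{i_j', \bfi_{-j}}$ collapses to $\exp\big(\sum_{E \ni j} [\theta_E(i_j, \bfi_{E \setminus j}) - \theta_E(i_j', \bfi_{E \setminus j})]\big)$, and one must argue this is constant as $\bfi_{-j}$ varies over the block fixed by $z_{-j}$. Weak hierarchicality together with $H \in \ms{H}$ — and Remark~\ref{rem:msH}, which reduces matters to two-way interactions — kills the terms in which $j$ pairs with a pooled coordinate; the delicate case is the interaction between a pooled level of $y_j$ and a singleton level $c_k \in H_k$ of another variable, and controlling this is precisely the crux of the argument. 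A cleaner alternative route to the same estimate is to bound the Tucker rank through the mode-$j$ nonnegative ranks and to treat each mode separately via Remark~\ref{rem:limcomon}, constructing for each $j$ a mode-specific latent variable that conditions on the interacting levels $\intlevj{j}$ of $y_j$ and pools the remainder; reconciling such a mode-by-mode bound with the single set $H$ that appears in the stated expression is where the argument will demand the most care.
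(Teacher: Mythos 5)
You have put your finger on exactly the right place, but the step you defer --- showing that $Pr(y_j = i_j \mid z)$ depends on $z$ only through $z_j$ when $z_j$ designates the pooled block $\bar{H}_j$ --- is a genuine gap, not a technicality, and it cannot be closed under the paper's hypotheses. (The paper itself offers no proof of this corollary, so your attempt has to stand on its own.) By Remark \ref{rem:msH}, membership of $H$ in $\ms{H}$ only requires each nonzero interaction $(E,\bfi_E)$ to have $i_j \in H_j$ for \emph{some} $j \in E$; it therefore permits a nonzero $\theta_{\{j,k\}}(i_j, c_k)$ with $i_j \in \bar{H}_j$ and $c_k \in H_k$, and precisely such a term makes the conditional law of $y_j$ on $\bar{H}_j$ depend on $z_k$. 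Concretely, in Example \ref{ex:boundtrivial} with $d=3$ and $H = (\{2\},\{2\})$ one has $Pr(y_1 = 3 \mid y_1 \in \bar{H}_1,\, y_2 = 2) = e^{\theta_{\{1,2\}}(3,2)}/(1+e^{\theta_{\{1,2\}}(3,2)}) \neq 1/2 = Pr(y_1 = 3 \mid y_1 \in \bar{H}_1,\, y_2 \in \bar{H}_2)$, so no single arm $\lambda^{(1)}_{h_1}$ exists and the latent-class Tucker representation you describe does not materialize. Your fallback route through mode unfoldings fares no better, and in fact shows the bound itself is problematic: any $m$-component decomposition of the form \eqref{eq:tucker} writes the mode-$j$ unfolding of $\pi$ as a product of a nonnegative $d_j \times m$ matrix with a nonnegative $m \times \prod_{k\neq j} d_k$ matrix, so each unfolding must have nonnegative (hence ordinary) rank at most $m$; in Example \ref{ex:boundtrivial} the unfolding is $\pi$ itself, whose rank the paper computes to be $3$, while $\bigwedge_{H\in\ms{H}}\bigvee_j(|H_j|+1) = 2$.

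The constructive part of your argument is salvageable under a strengthened hypothesis, and it is worth recording where the line falls. If $\ms{H}$ is replaced by the smaller collection of $H$ for which every $(E,\bfi_E) \in C_\theta$ satisfies $i_j \in H_j$ for \emph{all} $j \in E$, then for any $i_j \in \bar{H}_j$ every interaction term involving the pair $(j, i_j)$ vanishes, so $\log \pi_{\bfi}$ separates additively in $i_j$ and $\bfi_{-j}$ on that stratum; consequently $Pr(y_j = i_j \mid y_{[-j]})$ is free of $y_{[-j]}$ for $i_j \in \bar{H}_j$, the arms depend only on the own coordinate $z_j$, and your padded Tucker decomposition with mode-$j$ dimension $|H_j|+1$ is exact. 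Under the paper's actual definition of $\ms{H}$, however, the "delicate case" you isolate is a counterexample rather than a lemma, and you should flag the discrepancy rather than attempt to prove the statement as written.
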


The Tucker and PARAFAC decompositions represent two ends of the spectrum of tensor decomposition, with the PARAFAC having the simplest possible core but a rank that scales exponentially in $p$ (see Theorem \ref{thm:rankgeneraltight}), whereas the Tucker has a $m^p$ core but a rank that, by Corollary \ref{cor:tuckerrank}, does not depend on $p$ at all. One can conceptualize tensor decompositions that are intermediate in core complexity and rank scaling, which would correspond to more than one but fewer than $p$ latent variables. With this motivation, we propose a class of tensor factorizations that bridge the PARAFAC and Tucker approaches.  Specifically, we let
\begin{align}\label{eq:ctuck}
\pi_{c_1 \ldots c_p} = \sum_{h_1 = 1}^{m} \ldots \sum_{h_k = 1}^m \phi_{h_1 \ldots h_k} \prod_{j=1}^{p} \lambda_{h_j^* c_j}^{(j)},
\end{align}
where $h_j^* = h_{s_j}$ with $s_j \in \{1, \ldots,k\}$ for $j =1, \ldots, p$ and $k \ll p$ when $p$ is moderate to large.  The $s_j$'s are group indices for $\{y_j: j \in V\}$, with $s_j = \rho$ denoting that $y_j$ is allocated to group $\rho$. For a particular configuration of the $s_j$'s, $y_{[1:p]}$ are assigned to $k$ groups, and $s_j = s_{j'}$ indicates that $y_j$ and $y_{j'}$ belong to the same group. We refer to (\ref{eq:ctuck}) as a collapsed Tucker (c-Tucker) factorization.

Letting $z_i = (z_{i1}, \ldots , z_{ik})^{\T}$ denote a vector of group indices, the c-Tucker model in (\ref{eq:ctuck}) has a hierarchical representation where given $z_i$, $y_{[1:p]}$ are conditionally independent with $\mbox{pr}(y_{ij} = c_j \mid z_i, s_j) = \lambda_{z_{is_j} c_j}^{(j)}$.  If $k = 1$, we obtain the PARAFAC model (\ref{eq:parafacZ}) and for $k = p$ we have the Tucker factorization model in (\ref{eq:TuckerZ}).  When $1<k\ll p$, the core tensor $\phi$ in the c-Tucker has much smaller dimension relative to the Tucker core.

The c-Tucker model with $m=1$ represents the joint distribution of the variables as $k$ independent rank one PARAFAC expansions. For $m>1$, the model is a mixture of $m^k$ many $m$-term PARAFAC expansions. For any value of $k$, we define the c-Tucker rank of a tensor $\pi^{(0)}$ as the minimal value of $m$ such that there exists an exact $m$-term c-Tucker representation of $\pi^{(0)}$. We denote the nonnegative c-Tucker rank of a tensor $M$ where the number of variable classes is $k$ as $\rnkc(M,k)$. For any $1<k<p$, the nonnegative tensor ranks of a tensor $M$ obey the ordering $\rnkp(M) \ge \rnkc(M,k) \ge \rnkt(M)$.

Theorem \ref{thm:indparafac} shows that under certain circumstances, the c-Tucker rank can be dramatically less than the PARAFAC rank. When the data are binary and the graph consists of $k$ independent cliques, we can represent the joint distribution as $k$ independent PARAFAC expansions each with rank at most $2^{|\{h: s_j = h\}|-1}$. Since the c-Tucker model is a mixture of $m^k$ independent rank-$m$ PARAFAC expansions, the c-Tucker rank in this case is bounded above by $2^{|\{h: s_j = h\}| -1}$. 

The c-Tucker model is considerably more flexible than an independent PARAFAC model. In contrast to independent PARAFACs, interactions between variables in different groups exist in the c-Tucker model whenever $m > 1$. The PARAFAC expansions conditional on the groups parametrize within-group interactions, whereas the core parametrizes between-group interactions. Therefore the c-Tucker model can substitute a lower-rank core and larger groups for a higher-rank core and smaller groups. When the groups form cliques, the PARAFAC rank for each group will grow exponentially in group size. Thus in the case where there are few groups, the parameter complexity of a c-Tucker model will be dominated by the PARAFAC rank of the groups, whereas as the number of groups increases with constant $p$, the core dominates the parameter complexity. If variable groups are inferred, the tradeoff between a more Tucker-like and more PARAFAC-like model is automatic.

\section{Estimation and applications for c-Tucker models} \label{sec:applications}
We present an algorithm for inference and computation for c-Tucker models in the Bayesian paradigm, and provide guidance on prior choice. The model is illustrated in simulation studies and an application to the functional disability data from the national long term care survey (NLTCS). 

\subsection{Bayesian inference for c-Tucker models}
Bayesian inference for c-Tucker models requires that we specify priors on the parameters of the core, arms, and the group memberships of the variables. We choose conjugate Dirichlet priors on the arms $\lambda_{h_j^* c_j}^{(j)}$. The selection of prior hyperparameters on the arms is discussed in section \ref{subsec:armpriors}. To facilitate posterior computation, it is natural to model the probability tensor $\phi$ via a non-negative PARAFAC decomposition $\phi_{h_1 \ldots h_k} = \sum_{l = 1}^r  \xi_l \prod_{s=1}^k \psi_{l h_s}^{(s)}$, where $\xi = \{\xi_l\}$ is a vector of probabilities and $\psi_l^{(s)} = \{ \psi_{lh}^{(s)} \}$ are probability vectors of dimension $m$ for $s=\{1,\ldots,k\}$. We specify truncated Dirichlet process priors \cite{ishwaran2001gibbs} on the latent class probabilities $Pr(z_{is} = h)$ and fix the maximum number of latent classes such that one or more of the classes will be nearly unoccupied. A similar approach is used for the arms $\{\zeta_h^{(s)}\}$ in the PARAFAC expansion of the core. We choose a $\text{Dirichlet}(1/k,\ldots,1/k)$ prior on variable group probabilities. 

Our Bayes c-Tucker model can be expressed in hierarchical form as
\begin{align}\label{eq:hier_mod1}
& y_{ij} \mid z_{i1},\ldots,z_{ik}, \bfl^{(j)} \sim \mbox{Multinomial}(\{1,\ldots,d_j\}, \lambda_{z_{i h_{s_j}}1}^{(j)}, \ldots, \lambda_{z_{i h_{s_j}}d_j}^{(j)}), \notag \\
& \bfl_h^{(j)} \sim \mbox{Diri}(a_{h 1}, \ldots, a_{h d_j}), \notag \\
& z_{is} \mid w_i, \bfps^{(s)} \sim \mbox{Multinomial}(\{1,\ldots,m\}, \psi_{w_i1}^{(s)}, \ldots, \psi_{w_im}^{(s)}) \notag \\
& \mbox{pr}(w_i = l) = \nu_l^* \prod_{t < l} (1 - \nu_t^*), \, \nu_l^* \sim \mbox{beta}(1,\beta) \notag \\
& \psi_{lh}^{(s)} = \zeta_{lh}^{(s)} \prod_{h' < h} (1 - \zeta_{lh'}^{(s)}), \, \zeta_{lh}^{(s)} \sim \mbox{beta}(1,\delta_s) \notag \\
& s_1,\ldots,s_p \sim \mbox{Multinomial}(\{1,\ldots,k\}, \xi_1,\ldots,\xi_k) \notag \\
& \xi \sim \mbox{Dirichlet}(1/k,\ldots,1/k).
\end{align}
The joint likelihood of $(y_i, z_{i1},\ldots,z_{ik}, w_i)$ for $i = 1, \ldots, n$ given the model parameters $(\bfl, \bfps^{(1)}, \ldots,\bfps^{(k)}, \beta, \delta_1,\ldots, \delta_k)$ is given by
\begin{align}\label{eq:jnt_lik1}
& \left[\prod_{s=1}^ k \prod_{i=1}^n \prod_{j : s_j = s} \prod_{c_j=1}^{d_j} \left\{\lambda_{z_{is} c_j}^{(j)}\right\}^{\ind(y_{ij} = c_j)} \right] \times \notag \\
& \left[ \prod_{s=1}^k \prod_{i=1}^n  \prod_{h=1}^{m} \left\{\psi_{w_ih}^{(1)}\right\}^{\ind(z_{is} = h)} \right] \times \left[\prod_{i = 1}^n \prod_{l = 1}^k \nu_l^{1(w_i = l)} \right].
\end{align}
Bayesian computation for this model can be performed using a straightforward Gibbs sampler. The full conditionals and details of the computation are given in appendix \ref{app:computation}.

\subsection{Choices of priors on arms} \label{subsec:armpriors}
Theorems \ref{thm:rankgeneral} and \ref{thm:rankgeneraltight} establish that PARAFAC expansions of sparse log-linear models themselves consist of sparse terms. This suggests that the choice of $\text{Dirichlet}(1,\ldots,1)$ priors on the arms in a Bayes PARAFAC model, as in \cite{dunson2009nonparametric}, may not concentrate efficiently around probability tensors corresponding to sparse log-linear models. Because the c-Tucker is a mixture of PARAFACs, it is likely that the same principle operates in the c-Tucker expansion. Therefore we consider alternative priors on the tensor arms. Deriving explicit distributions transforming between the tensor and log-linear parameterizations is analytically intractable, so we conduct simulations to study the induced prior on the log-linear parameters for different choices of priors on tensor arms. 

The top panel of Figure \ref{fig:inducedprior} shows the prior induced on main effects, 2-way interactions, and 3-way interactions with a $\text{Dirichlet}(1,\ldots,1)$ prior on the arms, with $p=3$, $d=20$, and where the number of components is five ($m=5$). These histograms show the value of a single main effect, two-way interaction, and three-way interaction from 10,000 Monte Carlo samples. To place substantial prior probability on (approximately) sparse log-linear models, the induced prior on the interaction terms should be sharply peaked at zero with heavy tails. To empirically illustrate the benefit of nearly sparse arms in inducing such priors on log-linear model parameters, we sampled from a PARAFAC model in which the value of $a_h$ in the $\text{Dirichlet}(a_h,\ldots,a_h)$ prior on the tensor arms is decreasing in the component index $h$. Choosing $a_h = 1$ for $h=1$, $a_h = 1/d$ for $h = 2,3$, and $a_h = 1/d^2$ for $h=4,5$, we obtain the results in the second row of Figure \ref{fig:inducedprior}.

\begin{figure}[h]
\includegraphics[width=3in]{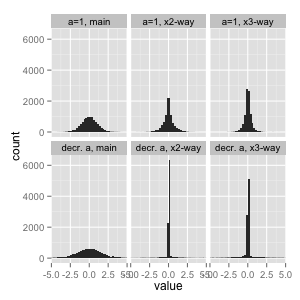} \\
\caption{Histograms of one main effect, one two-way interaction, and one three-way interaction for two priors on tensor arms in a Bayes PARAFAC model. In each row, the left panel shows the main effect, the center panel shows the two-way interaction, and the right panel shows the three-way interaction. The top row shows histograms for $a_h=1$ for all $h$, the second row shows histograms for $a_h$ decreasing with $h$. All histograms are based on 10,000 Monte Carlo draws. } \label{fig:inducedprior}
\end{figure}

Figure \ref{fig:inducedpriorL1} shows histograms of the $L_1$ norm of all of the main effects and interaction terms for the same choices of priors on the tensor arms. While the prior that sets $a_h=1$ for all $h$ induces a prior on the $L_1$ norm for interaction terms of both orders that has all of its mass bounded away from zero, the prior with $a_h$ decreasing in $h$ has a mode near zero in the $L_1$ norm of the two-way interactions. For three-way interactions, the mass is shifted toward zero for the prior with decreasing $a_h$. This confirms that the effect of concentrating the prior around nearly sparse terms in the PARAFAC is empirically significant in inducing shrinkage toward sparse log-linear models. We adopt similar Dirichlet priors with decreasing concentration parameters for the arms in the c-Tucker model.

\begin{figure}[h]
\includegraphics[width=3in]{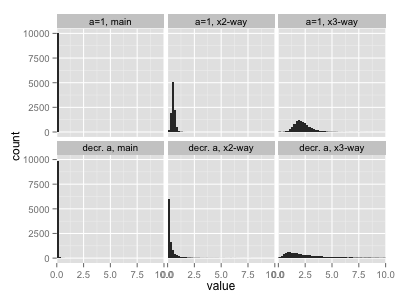} \\
\caption{Histograms of $L_1$ norm of main effects, two-way interactions, and three-way interactions for two priors on tensor arms in a Bayes PARAFAC model. In each row, the left panel shows the main effects, the center panel shows the two-way interactions, and the right panel shows the three-way interactions. The top row shows histograms for $a_h=1$ for all $h$, the second row shows histograms for $a_h$ decreasing with $h$. All histograms are based on 10,000 Monte Carlo draws.} \label{fig:inducedpriorL1}
\end{figure}

\subsection{Simulation studies and application}
Two simulations were conducted to illustrate the performance of the c-Tucker model. In both cases, all $y_j$ are binary. Posterior computation was performed using the MCMC algorithm described in appendix \ref{app:computation}. A burn-in of 10,000 iterations was performed, after which the MCMC was run for an additional 15,000 iterations, with samples gathered every tenth iteration. 

In the first case, $n=1000$ observations were simulated from the graphical model in example 3 of Figure \ref{fig:graphs}. The graph determines $S_{\theta}$, the elements of which are sampled \emph{iid} $N(0,9)$, and $\theta_{\emptyset}$ is calculated so that $\pi^{(0)}$ is a probability tensor. In this example $p=7$, $d=2$ and the contingency table has $128$ cells. The groups are specified \emph{a priori} as $s_j = 1$ for $j \le 4$ and $s_j = 2$ for $j > 5$. The posterior probability $Pr(\rnkp(\phi) > 1) < 0.001$, and thus the posterior correctly recovers the independence of the two cliques. Figure \ref{fig:fixedgroupsbox} shows a boxplot of posterior samples of $\theta_{E'}$ for all $E'$ with $|E'| = 2$.  These posterior samples are obtained by solving for the log-linear model parameters given posterior samples of $\pi$. The posterior concentrates around the true parameter values, but shrinks them somewhat toward zero, as expected. The two-way 
interactions that are truly zero have very narrow credible intervals centered at zero. 

\begin{figure}[h]
\includegraphics[width=4in]{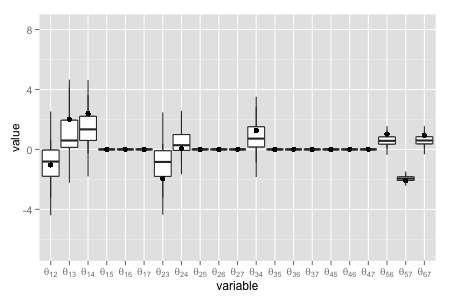} \\
\caption{Posterior samples and true parameter values for simulation of data with dependence structure given in the graph in example 3 of Figure \ref{fig:graphs}. The overlaid circles show the true parameter values, the dark horizontal lines the median, and the length of the whiskers shows a 95 percent posterior credible interval for each parameter.} \label{fig:fixedgroupsbox}
\end{figure}

In the second simulation example, we simulate $n=2000$ observations from the graphical model in example 5 of Figure \ref{fig:graphs}, where again the graph determines the support and nonzero elements of $\bs{\theta}$ are sampled $iid$ $N(0,3)$. In this example $p=8$ and $\pi^{(0)}$ has 256 cells, so the sample size relative to the parameter complexity is the same as in the first simulation. We learn the group identities of the variables with $k=3$. Table \ref{tab:topgroupssim} shows ten groupings with the highest posterior probabilities and their corresponding posterior weights. The posterior probability that exactly two of the groups are occupied is $0.89$. Notably, the groups do not in general correspond to cliques. Since the c-Tucker model parametrizes interactions between groups via the core and interactions within groups with PARAFAC expansions, one would not necessarily expect the variable groups to corresponding to cliques except when the cliques are marginally independent. 

Shown in table \ref{tab:pwcv_ex5} are the posterior means of pairwise Cram\'{e}r's V for $y_1,\ldots,y_8$, with the true values displayed for comparison. The posterior means are very similar to the true values. Figure \ref{fig:learngroupsbox} shows boxplots of posterior samples for the interaction terms in this simulation study. Also shown are the true parameter values (black circles) and parameter estimates obtained using lasso (blue circles). The lasso penalty was selected using 10-fold cross-validation and estimated using the \texttt{glmnet} package for \texttt{R} \cite{Friedman:Hastie:Tibshirani:2009:JSSOBK:v33i01}, with the main effects set to have no penalty. The 95 percent posterior credible intervals for the c-Tucker model have 96.5 percent coverage, and the c-Tucker model is at least as successful as lasso at recovering the true parameter values. The intervals are in some cases quite broad. We attribute this largely to the fact that the c-Tucker model, unlike commonly employed priors on log-linear 
models, does not restrict the induced log-linear model to be graphical, hierarchical, or even weakly hierarchical. It is undoubtedly the case that in many instances there exist values of $\bs{\theta}$ that do not satisfy weak hierarchicality, yet for which $\pi(\bs{\theta}) \approx \pi^{(0)}$, leading to very similar likelihood values. Therefore, the induced posterior for $\bs{\theta}$ in the c-Tucker model is likely a mixture of numerous weakly hierarchical and non-weakly hierarchical log-linear models, leading to the relatively broad credible intervals that we observe. 

\begin{table}[h]
\begin{tabular}{cccc}
\hline
\bf{group 1} & \bf{group 2} & \bf{group 3} & \bf{post. prob.} \\
\hline
  $\emptyset$ & $\{2 ,4,6,7,8\}$ & $\{1,3,5\}$ & 0.200 \\
  $\emptyset$ & $\{2,4,6,7\}$ & $\{1,3,5,8\}$ & 0.140 \\
  $\emptyset$ & $\{2,4,5,6,7,8\}$ & $\{1,3\}$ & 0.112 \\
  $\emptyset$ &  $\{2,4,5,6,7 \}$ & $\{1,3,8\}$ & 0.085 \\
  $\emptyset$ &  $\{2,4,7,8 \}$ & $\{1,3,5,6\}$ & 0.067 \\
  $\emptyset$ &  $\{2,4,7 \}$ & $\{1,3,5,6,8\}$ & 0.063 \\
  $\emptyset$ &  $\{2,4,5,7,8\}$ & $\{1,3,6\}$ & 0.038 \\
  $\emptyset$ &  $\{2,4,5,7\}$ & $\{1,3,6,8\}$ & 0.038 \\
  $\emptyset$ &  $\{2,4,6\}$ & $\{1,3,5,7,8\}$ & 0.037 \\
  $\emptyset$ &  $\{2,4,6,8\}$ & $\{1,3,5,7\}$ & 0.029 \\
  \hline
\end{tabular}
\caption{Table of variable groupings with ten highest posterior probabilities for simulation example with dependence structure given in example 5 of Figure \ref{fig:graphs}.} \label{tab:topgroupssim}
\end{table}

\begin{table}[ht]
\centering
\begin{tabular}{rrrrrrrrr}
  \hline
 & $\bs{y}_1$ & $\bs{y}_2$ & $\bs{y}_3$ & $\bs{y}_4$ & $\bs{y}_5$ & $\bs{y}_6$ & $\bs{y}_7$ & $\bs{y}_8$ \\ 
  \hline
$\bs{y}_1$ & 0.00 & 0.44 & 0.36 & 0.30 & 0.00 & 0.01 & 0.00 & 0.00 \\ 
$\bs{y}_2$ & 0.41 & 0.00 & 0.78 & 0.53 & 0.00 & 0.01 & 0.00 & 0.00 \\ 
$\bs{y}_3$ & 0.40 & 0.80 & 0.00 & 0.67 & 0.00 & 0.02 & 0.00 & 0.00 \\ 
$\bs{y}_4$ & 0.28 & 0.53 & 0.64 & 0.00 & 0.00 & 0.02 & 0.00 & 0.00 \\ 
$\bs{y}_5$ & 0.00 & 0.00 & 0.00 & 0.00 & 0.00 & 0.00 & 0.02 & 0.03 \\ 
$\bs{y}_6$ & 0.01 & 0.02 & 0.02 & 0.02 & 0.00 & 0.00 & 0.09 & 0.02 \\ 
$\bs{y}_7$ & 0.00 & 0.00 & 0.00 & 0.00 & 0.00 & 0.08 & 0.00 & 0.17 \\ 
$\bs{y}_8$ & 0.00 & 0.00 & 0.00 & 0.00 & 0.00 & 0.02 & 0.03 & 0.00 \\ 
   \hline
\end{tabular}
\caption{True pairwise Cram\'{e}r's V (above the main diagonal) and posterior mean pairwise Cram\'{e}r's V (below the main diagonal) in simulation example for graphical model in example 5 of figure \ref{fig:graphs}.} \label{tab:pwcv_ex5}
\end{table}

We apply the model to analysis of functional disability data from the national long term care survey (NLTCS). The data take the form of a $2^{16}$ contingency table, and are extensively described in \cite{dobra2011copula}. We performed posterior computation using the MCMC algorithm described in appendix \ref{app:computation}. After a burn-in period of 10,000 iterations, we collected samples every tenth iteration for 15,000 additional iterations. Table \ref{tab:pwcv_nltcs} shows the posterior means of pairwise Cram\'{e}r's V and $Pr(H_{1,\rho}|\bs{y})$, where $H_{1,\rho} = \ind(\rho>0.1)$ and $\rho$ is the pairwise Cram\'{e}r's V. For comparison, we reproduce the same results based on posterior samples for the copula Gaussian graphical model from \cite{dobra2011copula} in Table \ref{tab:pwcv_nltcs_copula}. Our results demonstrate close agreement with \cite{dobra2011copula}.

\section{Conclusion}
The relationship between the sparsity of a log-linear model and the rank of the associated probability tensor derived here is broadly applicable beyond the scope of this paper. There are few results on the rank of tensors of general dimensions and thus the proof techniques employed here may be of independent interest. In addition, there is clear need for additional work on prior choice in Bayesian latent structure models. The theoretical results obtained here provide a basis for high-dimensional asymptotic studies of latent structure models with sparse log-linear models as a truth class, and are also relevant for the development of optimization-based penalized likelihood approaches for inference in latent structure models. Moreover, there is a substantial computational burden in transforming between the two parametrizations unless the sparsity pattern in the log-linear model parameters can be determined directly from the tensor expansion. Further development of the relationship between sparsity and tensor rank could alleviate this important computational hurdle.

\appendix

\section{Proofs and auxiliary results} \label{app:proofs}

\subsection{Auxiliary results}
We state and prove lemma \ref{lem:hadamard} which is used to prove Theorem \ref{thm:rankgeneral}. 
\begin{lemma} \label{lem:hadamard}
 Let $\pi$ and $\psi$ be two non-negative $d^p$ tensors. Then, $\rnkp(\pi \circ \psi) \leq \rnkp(\pi) \rnkp(\psi)$, where $\circ$ denotes a Hadamard product, and $\rnkp(\pi + \psi) \leq \rnkp(\pi) + \rnkp(\psi)$. 
\end{lemma}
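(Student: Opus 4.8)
The plan is to prove both inequalities directly from the definition of the nonnegative PARAFAC rank in \eqref{eq:nonnegrank}, by exhibiting explicit nonnegative PARAFAC expansions of $\pi + \psi$ and $\pi \circ \psi$ whose number of terms matches the claimed bounds. Throughout I would fix minimal nonnegative expansions
\[
\pi = \sum_{h=1}^r \bigotimes_{j=1}^p a_h^{(j)}, \qquad \psi = \sum_{l=1}^s \bigotimes_{j=1}^p b_l^{(j)},
\]
where $r = \rnkp(\pi)$, $s = \rnkp(\psi)$, and all arms $a_h^{(j)}, b_l^{(j)}$ are nonnegative vectors in $\bb R^d$.

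The subadditivity bound is immediate: simply concatenating the two expansions above writes $\pi + \psi$ as a sum of $r + s$ nonnegative rank-one tensors, so $\rnkp(\pi+\psi) \le r + s = \rnkp(\pi) + \rnkp(\psi)$. The only point worth noting is that nonnegativity is preserved, since one merely takes the union of two families of nonnegative arms and introduces no cancellation.

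For the Hadamard bound, the key observation is that the Hadamard product of two rank-one tensors is again rank one, with arms given by the entrywise products of the corresponding arms: for fixed $h, l$,
\[
\left(\bigotimes_{j=1}^p a_h^{(j)}\right) \circ \left(\bigotimes_{j=1}^p b_l^{(j)}\right) = \bigotimes_{j=1}^p \left(a_h^{(j)} \circ b_l^{(j)}\right),
\]
which follows by comparing entries, since each side evaluated at $(i_1,\ldots,i_p)$ equals $\prod_{j=1}^p a_{h i_j}^{(j)} b_{l i_j}^{(j)}$. Distributing the Hadamard product over the two sums then expresses $\pi \circ \psi$ as $\sum_{h=1}^r \sum_{l=1}^s \bigotimes_{j=1}^p (a_h^{(j)} \circ b_l^{(j)})$, a sum of $rs$ rank-one terms whose arms $a_h^{(j)} \circ b_l^{(j)}$ are again nonnegative. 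Hence $\rnkp(\pi \circ \psi) \le rs = \rnkp(\pi)\rnkp(\psi)$.

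I do not expect a genuine obstacle here; both bounds reduce to the distributive law together with the two elementary facts that outer products multiply entrywise into a single outer product, and that nonnegativity is closed under sums, entrywise products, and outer products. The one subtlety I would be careful to state is that the constructed expansions are legitimate \emph{nonnegative} PARAFAC expansions, so that they bound the nonnegative rank rather than merely the ordinary tensor rank; this is the only place where the sign constraints genuinely enter, and verifying it is precisely what makes the inequalities valid for $\rnkp$.
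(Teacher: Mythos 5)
Your proof is correct and follows essentially the same route as the paper's: fix minimal nonnegative expansions, concatenate them for the sum bound, and use the fact that the Hadamard product of two nonnegative rank-one tensors is the rank-one tensor with entrywise-multiplied arms, then distribute over the double sum. The only cosmetic difference is that you spell out the entrywise verification of the rank-one identity, which the paper leaves implicit.
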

\begin{proof}
Let $\rnkp(\pi) = m, \rnkp(\psi) = k$ and $\phi = \pi \circ \psi$. For $1 \leq j \leq p$, there exist non-negative vectors $\lambda_h^{(j)} \in \bb{R}_{+}^d, h = 1, \ldots, m$ and $\zeta_l^{(j)} \in \bb{R}_+^d, l =1, \ldots, k$, such that $\pi = \sum_{h=1}^m \lambda_h^{(1)} \otimes \ldots \otimes \lambda_h^{(p)}$ and $\psi = \sum_{l=1}^k \zeta_h^{(1)} \otimes \ldots \otimes \zeta_h^{(p)}$.  Then, it is easy to see that 
$$
\phi = \sum_{h=1}^m \sum_{l=1}^k \gamma_{hl}^{(1)} \otimes \ldots \otimes \gamma_{hl}^{(p)},
$$
where $\gamma_{hl}^{(j)} = \lambda_h^{(j)} \circ \zeta_l^{(j)}$ for $1 \leq j \leq p$. Clearly, for any $j$, $\gamma_{hl}^{(j)} \in \bb{R}_{+}^d$ for $h = 1, \ldots, m; l = 1, \ldots, k$. Thus, $\rnkp(\phi) \leq m k$. 

In particular, if $\rnkp(\psi) = 1$, we have $\rnkp(\phi) \leq m$. This bound cannot be globally improved, or in other words, the upper bound can be achieved.  Take for example, $\psi = \zeta^{(1)} \otimes \ldots \otimes \zeta^{(p)}$, with $\zeta^{(j)} = (1, \ldots, 1)^{\T}$ for all $j$. 

Finally, we note that if
\begin{align*}
\pi = \sum_{h=1}^{m_1} \bigotimes_{j=1}^p \tlam^{(j)}_h \text{ and } \psi = \sum_{h=1}^{m_2} \bigotimes_{j=1}^p \tilde{\zeta}^{(j)}_h
\end{align*}
then 
\begin{align*}
\pi + \psi= \sum_{h=1}^{m_1} \bigotimes_{j=1}^p \tlam^{(j)}_h   + \sum_{h=1}^{m_2} \bigotimes_{j=1}^p \tilde{\zeta}^{(j)}_h
\end{align*}
so $\rnkp(\pi+\psi) = m_1 + m_2 = \rnkp(\pi) + \rnkp(\psi)$. 
\end{proof}

\subsection*{Proof of Theorem \ref{thm:rankgeneral}}
Without loss of generality, we assume $\sigma$ is the identity permutation and drop the corresponding subscripts. Let $\m P^{(1)}$ be the partition of $\m I_1$ consisting of the singleton sets $\{c\}$ for $c \in D^{(1)}$ and the set $(D^{(1)})^{c}$. Weak hierachicality ensures that $y_1 \ind_{(y_1 \in A)} \ci y_{2:p}$ for any $A \in \m P^{(1)}$, where $y_{2:p} = y_{2},\ldots,y_{p}$. Using the fact that for any two random variables $Z_1, Z_2$ and any measurable set $A$, $Z_1 \ind_{(Z_1 \in A)} \ci Z_2 \Leftrightarrow Z_1 \ci Z_2 \mid A$, we have $y_1 \ci y_{2:p} \mid A$ for any $A \in \m P^{(1)}$. Enumerating the sets in $\m P^{(1)}$ as $A_1, \ldots, A_{m_1}$, with $m_1 = |\m P^{(1)} | = |D^{(1)}| + 1$, we can write $\pi$ as 
\begin{align}\label{eq:cond_on_1}
\pi_{c_{1} \ldots c_{p}} = \sum_{h=1}^{m_{1}} \nu_h \lambda_{h c_{1}} \psi_{h c_{2} \ldots c_{p}},
\end{align}
where for each $1 \leq h \leq m_1$, $\nu_h = Pr(A_h)$, $\lambda_h \in \Delta^{(d-1)}$ with $\lambda_{hc} = Pr(y_1 = c \mid A_h)$ and $\psi_h$ is a $d^{p-1}$ non-negative tensor representing the joint probability of $y_{2:p} \mid A_h$, i.e.,
\begin{align*}
\psi_{hc_2 \ldots c_p} = Pr(y_2 = c_2, \ldots, y_p = c_p \mid A_h).
\end{align*}
Define $d^p$ tensors $\{\pi_h^{(1)}\}$ and $\{\pi_h^{(2)}\}$ by 
\begin{align*}
\pi^{(1)}_h &= \lambda_h \otimes \bs{1} \ldots \otimes \bs{1}  \\
(\pi^{(2)}_h)_{c_{1}\ldots c_{p}} &= \nu_h \psi_{h c_{2} \ldots c_{p}}. 
\end{align*}
The expansion of $\pi$ in \eqref{eq:cond_on_1} can now be written in tensor notation as $\pi = \sum_{h=1}^{m_1} \pi^{(1)}_h \circ \pi^{(2)}_h$. 
Clearly $\rnkp(\pi^{(1)}_h) = 1$ and it is easily verified that $\rnkp(\pi^{(2)}_h) \le \rnkp(\psi_h)$ for all $h$. Therefore, using Lemma \ref{lem:hadamard} we have that $\rnkp(\pi) \le m_{1} r$, where $r = \rnkp(\psi_h)$. 

Recursively applying this process for the variables $y_{2},\ldots,y_{p}$, we can show that $r \leq \prod_{j=2}^p m_{j} = \prod_{j=2}^p (|D^{(j)}| +1)$, so that 
$$\rnkp(\pi) \leq \prod_{j=1}^p \{ |D^{(j)}| + 1\}.$$
For any permutation $\sigma$, we can obtain a result as in the above display by scanning through the variables in the sequence $\sigma(1), \ldots, \sigma(p)$. Taking the minimum over all permutations $\sigma$, we obtain the desired result. 

\subsection*{Proof of (12) in Theorem \ref{thm:rankgeneraltight}}
Fix $H \in \ms{H}$. Let $\bar{H}_j = \m I_j \backslash H_j$ and let $\m P_{H, j}$ denote the partition of $\m I_j$ consisting of the singleton sets $\{i_j\}$ for $i_j \in H_j$ and the set $\bar{H}_j$. Define a partition $\m P_H^0$ of $\m I_V$ as the cartesian product of the partitions $\m P_{H, j}$ as in \eqref{eq:part}. We show that for any set $A \in \m P_H^0$, \eqref{eq:lvind1} is satisfied, i.e., 
\begin{align}\label{eq:lvind2}
Pr(y_1 = i_1, \ldots, y_p = i_p \mid A) = \prod_{j=1}^p Pr(y_j = i_j \mid A),
\end{align}
for any $\bfi \in \m I_V$. 
Based on the discussion in Section 3.1, the random variable $z = z_H^0$ corresponding to the partition $\m P_H^0$ defined via \eqref{eq:lat} will then satisfy \eqref{eq:lvind}, implying
$$\rnkp(\pi) \leq |\m P_H^0| = \prod_{j=1}^p | \m P_{H, j} | = \prod_{j=1}^p (|H_j| + 1). $$

We now proceed to establish \eqref{eq:lvind2}. Fix $A \in \m P_H^0$. By construction, 
\begin{align}\label{eq:A_def}
A = \bigtimes_{k \in \bar{J}} \{c_k\} \times \bigtimes_{j \in J} \bar{H}_j
\end{align}
for some $J \subset V$, $\bar{J} = V \backslash J$ and $c_k \in H_k$ for all $k \in \bar{J}$. Without loss of generality, we assume $J = \{q, \ldots, p\}$ for some integer $q \geq 1$. 

Let $\tilde{ \m I}_V$ denote the subset of $\m I_V$ consisting of cells $\bfi$ such that 
$i_k = c_k$ for all $k \in \bar{J}$ and $i_j \in \bar{H}_j$ for all $j \in J$. It is easy to see that for any $\bfi \notin \tilde{\m I}_V$, \eqref{eq:lvind2} is satisfied trivially since both sides are reduced to zero or one simultaneously. Hence, it suffices to show that \eqref{eq:lvind2} holds for any $\bfi \in \tilde{ \m I}_V$. 

Fix $\bfi \in \tilde{ \m I}_V$. Let $A_{\bfi}$ denote the subset of $\m I_V$ corresponding to the event $\{y_j = i_j, j \in V\}$ in $\m Y$, so that
$$A_{\bfi} = \bigtimes_{j \in V} \{i_j\}, \quad Pr(A_{\bfi}) = \pi_{\bfi}.$$
Clearly, $A_{\bfi} \subset A$, which implies $Pr(A_{\bfi} \mid A) = \pi_{\bfi}/Pr(A)$. Further, $Pr(y_k = i_k \mid A) = 1$ for any $k \in \bar{J}$, since $i_k = c_k$ for $k \in \bar{J}$. Therefore, \eqref{eq:lvind2} reduces to showing
\begin{align}\label{eq:lvind3}
\frac{\pi_{\bfi}}{Pr(A)} = \prod_{l \in J} Pr(y_l = i_l \mid A).
\end{align}
For $E \subset V$, we introduce the notation
$$\barh_E = \prod_{j \in E} \bar{H}_j. $$
We shall use $\bfa$ to generically denote an element of $\barh_J$, i.e., $\bfa$ is a $|J|$-vector of indices with $\alpha_j$ the entry in $\bfa$ corresponding to variable $j \in J$. For $l \in J$, $J^{(-l)}$ shall denote the set $J \backslash \{l\}$. We use $\bfa^{(l)}$ to generically denote an element of $\barh_{J^{(-l)}}$, with $\alpha_j^{(l)}$ the entry in $\bfa^{(l)}$ corresponding to variable $j \in J^{(-l)}$. 

Finally, for a partition of $V$ into $J_1, J_2, J_3$, denote
\begin{align}\label{eq:pi_altnot}
\pi^{(J_1, J_2, J_3)}_{f_j g_k h_l} := Pr \bigg[\bigtimes_{j \in J_1} \{f_j\} \times \bigtimes_{k \in J_2} \{g_k\} \times \bigtimes_{l \in J_3} \{h_l\} \bigg]. 
\end{align}
For any $l \in J$,
\begin{align}
Pr(y_l = i_l \mid A) &= \frac{Pr\bigg[ \bigtimes_{k \in \bar{J}} \{c_k\} \times \{i_l\} \times \bigtimes_{j \in J^{(-l)}} \bar{H}_j \bigg]}{Pr(A)} \nonumber \\
& = \frac{\pi_{\bfi}}{Pr(A)} \sum_{\bfa^{(l)} \in \barh_{J^{(-l)}}} \frac{\pi^{(\bar{J}, \{l\}, J^{(-l)})}_{c_k i_l \alpha^{(l)}_j} }{\pi_{\bfi}}. \label{eq:i_lgivA}
\end{align}
In the above display, we adopt the notation in \eqref{eq:pi_altnot}, with $V$ partitioned into $(\bar{J}, \{l\}, J^{(-l)})$ and 
$$
\pi^{(\bar{J}, \{l\}, J^{(-l)})}_{c_k i_l \alpha^{(l)}_j} = Pr \bigg[ \bigtimes_{k \in \bar{J} } \{c_k\} \times \{i_l\} \times \bigtimes_{j \in J^{(-l)}} \{ \alpha_j^{(l)}  \} \bigg].
$$
From \eqref{eq:i_lgivA}, we have
\begin{align*}
\prod_{l \in J} Pr(y_l = i_l \mid A) = \bigg[\frac{\pi_{\bfi}}{Pr(A)}\bigg]^{|J|} \sum_{ \bfa^{(q)} \in \barh_{J^{(-q)}} } \cdots \sum_{ \bfa^{(p)} \in \barh_{J^{(-p)}} }  \prod_{l \in J} \frac{\pi^{(\bar{J}, \{l\}, J^{(-l)})}_{c_k i_l \alpha^{(l)}_j} }{\pi_{\bfi}}.
\end{align*}
Substituting this in \eqref{eq:lvind3}, we have \eqref{eq:lvind3} is equivalent to showing
\begin{align}\label{eq:lvind4}
\bigg[\frac{Pr(A)}{\pi_{\bfi}}\bigg]^{|J| - 1} = \sum_{ \bfa^{(q)} \in \barh_{J^{(-q)}} } \cdots \sum_{ \bfa^{(p)} \in \barh_{J^{(-p)}} }  \prod_{l \in J} \frac{\pi^{(\bar{J}, \{l\}, J^{(-l)})}_{c_k i_l \alpha^{(l)}_j} }{\pi_{\bfi}}.
\end{align}
Recalling the set $A$ from \eqref{eq:A_def}, we have
$$
\frac{Pr(A)}{\pi_{\bfi}} = \sum_{\bfa \in \barh_J} \frac{ \pi^{(\bar{J}, J)}_{c_k \alpha_j} }{\pi_{\bfi}},
$$
implying
\begin{align}\label{eq:lvind5}
\bigg[\frac{Pr(A)}{\pi_{\bfi}}\bigg]^{|J| - 1} = \sum_{\bfa_q \in \barh_J} \cdots \sum_{\bfa_{p-1} \in \barh_J} \prod_{l \in J^{(-p)} } \frac{ \pi^{(\bar{J}, J)}_{c_k \alpha_{lj}} }{\pi_{\bfi}},
\end{align}
where $\bfa_q, \ldots, \bfa_{p-1}$ denote $|J| - 1$ independent copies of the running index $\bfa$, and $\alpha_{lj}$ is the entry in $\bfa_l$ corresponding to variable $j$. 

It now amounts to show that the expressions in the right hand side of \eqref{eq:lvind4} and \eqref{eq:lvind5} are the same. We first argue that both expressions contain the same number of terms. To see this, let $| \bar{H}_j| = m_j$. The expression of $Pr(y_l  = i_l \mid A)$ in \eqref{eq:i_lgivA} is a sum over $\prod_{j \neq l} m_j$ terms, and so $\prod_{l \in J} Pr(y_l  = i_l \mid A)$ has $\prod_{l \in J} \prod_{j \neq l} m_j = \prod_{l \in J} m_l^{(|J| - 1)}$ terms. Accordingly, the right hand side in \eqref{eq:lvind4} has $\prod_{l \in J} m_l^{(|J| - 1)}$ many terms. On the other hand, $Pr(A)/\pi_{\bfi}$ is a sum over $\prod_{j \in J} m_j$ terms, and hence $\{ Pr(A)/\pi_{\bfi} \}^{(|J| - 1)}$ in \eqref{eq:lvind5} also has $\prod_{j \in J} m_j^{(|J| - 1)}$ terms. 

Therefore, it now amounts to show that each term inside the summation in the right hand side of \eqref{eq:lvind4} has a one-to-one correspondence with a term in the right hand side of \eqref{eq:lvind5}. We establish this by showing
\begin{align}\label{eq:rat_red}
\prod_{l \in J} \frac{\pi^{(\bar{J}, \{l\}, J^{(-l)})}_{c_k i_l \alpha^{(l)}_j} }{\pi_{\bfi}} = \prod_{l \in J^{(-p)} } \frac{ \pi^{(\bar{J}, J)}_{c_k \alpha_{lj}} }{\pi_{\bfi}},
\end{align}
when for each $l$, $\alpha^{(l)}_j = \alpha_{lj}$ for all $j \neq l$. Introducing additional notation, let $\m E = \{E = E_1 \cup \{j\} : E_1 \subset \bar{J}, j \in J_2\}, \m E^{(-l)} = \{E = E_1 \cup \{j\} : E_1 \subset \bar{J}, j \in J^{(-l)} \}$ and $\m E^{(l)} = \{E = E_1 \cup \{l\} : E_1 \subset \bar{J} \}$. For any $l$, clearly $\m E$ is a disjoint union of $\m E^{(-l)}$ and $\m E^{(l)}$. Let $\bfi^{(l)}$ denote the cell such that $i^{(l)}_{k} = c_k$ for $k \in \bar{J}$ and $i^{(l)}_{j} = \alpha_{lj}$ for $j \in J$. 

First, consider the expression in the right hand side of \eqref{eq:rat_red}. We have 
\begin{align}
\frac{  \pi^{(\bar{J}, J)}_{c_k \alpha_{lj}} }{\pi_{\bfi}}
& = \exp \bigg[ \sum_{E \subset V} \bigg \{ \theta_E(\bfi^{(l)}_E) - \teie  \bigg\} \bigg] \nonumber \\
& = \exp \bigg[ \sum_{E \subset \m E} \bigg \{ \theta_E(\bfi^{(l)}_E) - \teie  \bigg\} \bigg] \nonumber \\
& = \exp \bigg[ \sum_{E \subset \m E^{(-l)}} \bigg \{ \theta_E(\bfi^{(l)}_E) - \teie  \bigg\} \bigg] \, 
       \exp \bigg[ \sum_{E \subset \m E^{(l)}} \bigg \{ \theta_E(\bfi^{(l)}_E) - \teie  \bigg\} \bigg] \label{eq:rat_rat_rhs}
\end{align}
The first equality in the above display simply follows from the expression of the cell probabilities for log-linear models in \eqref{eq:loglineargen}. The second inequality is the key one which uses (i) since $i^{(l)}_k = i_k = c_k$ for all $k \in \bar{J}$, all interaction terms corresponding to $E \subset \bar{J}$ cancel out between the numerator and denominator; and (ii) any $E \subset V$ such that $|E \cap J| \geq 2$, $\theta_E(\bfi^{(l)}_E) = \teie = 0$, given weak hierarchically and the condition $C_{\theta} = T_{C_{\theta}, H}$. To see this, suppose that there exists $E \subset V$ with $|E \cap J| \ge 2$ such that $\teie \ne 0$ for some $\bs{i} \in A$. By weak hierarchicality, there must be $j, j^* \in J$ such that $\theta_{\{j,j^*\}}(\alpha_j,\alpha_{j^*}) \ne 0$ for some $(\alpha_j, \alpha_{j^*}) \in \barh_j \times \barh_{j^*}$. Then $\theta_{\{j,j^*\}}(\alpha_j,\alpha_{j^*}) \notin T_{C_{\theta},H}$, contradicting $C_{\theta} = T_{C_{\theta}, H}$.

Using the same argument and additionally the fact that $\alpha^{(l)}_j = \alpha_{lj}$ for all $j \neq l$, we can simplify the expression in left hand side of \eqref{eq:rat_red} as
\begin{align}
\frac{\pi^{(\bar{J}, \{l\}, J^{(-l)})}_{c_k i_l \alpha^{(l)}_j} }{\pi_{\bfi}} = \exp \bigg[ \sum_{E \subset \m E^{(-l)}} \bigg \{ \theta_E(\bfi^{(l)}_E) - \teie  \bigg\} \bigg]. \label{eq:rat_red_lhs}
\end{align}
Therefore,
\begin{align}
& \prod_{l \in J^{(-p)} } \frac{ \pi^{(\bar{J}, J)}_{c_k \alpha_{lj}} }{\pi_{\bfi}} \nonumber \\
& = \prod_{l \in J^{(-p)} } \exp \bigg[ \sum_{E \subset \m E^{(-l)}} \bigg \{ \theta_E(\bfi^{(l)}_E) - \teie  \bigg\} \bigg] \, \prod_{l \in J^{(-p)} } \exp \bigg[ \sum_{E \subset \m E^{(l)}} \bigg \{ \theta_E(\bfi^{(l)}_E) - \teie  \bigg\} \bigg]  \nonumber \\
& = \prod_{l \in J} \exp \bigg[ \sum_{E \subset \m E^{(-l)}} \bigg \{ \theta_E(\bfi^{(l)}_E) - \teie  \bigg\} \bigg] = \prod_{l \in J} \frac{\pi^{(\bar{J}, \{l\}, J^{(-l)})}_{c_k i_l \alpha^{(l)}_j} }{\pi_{\bfi}} \nonumber,
\end{align}
establishing \eqref{eq:rat_red}. The second inequality in the above display used
\begin{align*}
\prod_{l \in J^{(-p)} } \exp \bigg[ \sum_{E \subset \m E^{(l)}} \bigg \{ \theta_E(\bfi^{(l)}_E) - \teie  \bigg\} \bigg] = \exp \bigg[ \sum_{E \subset \m E^{(-p)}} \bigg \{ \theta_E(\bfi^{(l)}_E) - \teie  \bigg\} \bigg],
\end{align*}
since $\m E^{(-p)} = \bigcup_{l \neq p} \m E^{(l)}$ is a disjoint union. 

\subsection*{Proof of (13) in Theorem \ref{thm:rankgeneraltight}}

The main idea in this part of the proof is that we can merge certain sets in $\m P_H^0$ to create a coarser partition without sacrificing the conditional independence. 

For a set $A = \bigtimes_{j \in V} A_j$ in $\m P_H^0$ and $J \subset V$, let $\Pi_J(A)$ denote 
$$\Pi_J(A) = \prod_{j \in J} A_j.$$
With a slight abuse of notation, we shall use $\Pi_l(A)$ to denote the $l$th coordinate projection, i.e., $\Pi_l(A) = A_l$. 

Fix $l \in V$ and let $V^{(-l)} = V \setminus \{l\}$. In this proof, we shall use $\bfa$ to denote a $V^{(-l)}$-cell suppressing the dependence on $l$. Given $\bfa$, let
\begin{align}\label{eq:AH_l}
\m P_{H, l}^{\bfa} = \big \{ A \in \m P_H^0 : \Pi_{V^{(-l)}}(A) = \bigtimes_{j \neq l} \{\alpha_j\}  \big \}.
\end{align} 
Let $\m A$ denote the collection of all $V^{(-l)}$-cells $\bfa$ such that $\m P_{H, l}^{\bfa}$ is non-empty. For $\bfa \in \m A$, let 
\begin{align}\label{eq:B_a}
B^{\bfa} = \bigcup_{A \in \m P_{H, l}^{\bfa}} A. 
\end{align}
Note that for any $\bfa \in \m A$, $\mid \m P_{H, l}^{\bfa} \mid = |H_l| + 1$, since $\Pi_l(A)$ ranges over the elements of $\m P_{H, l}$, i.e., $\{ i_l \}$ for $i_l \in H_l$ and $\bar{H}_l$. It is also evident that $B^{\bfa} = \bigtimes_{j \neq l} \{\alpha_j\} \times \m I_l$. 

We now create a coarser partition $\m P_H^{(l)}$ out of $\m P_H^0$ by replacing the collection of sets $\m P_{H, l}^{\bfa}$ by the single set $B^{\bfa}$ for every $\bfa \in \m A$, so that
\begin{align}\label{eq:PH_l}
\m P_{H, l} = \bigcup_{\bfa \in \m A} \bigg[ \big(\m P_H^0 \setminus \m P_{H, l}^{\bfa}  \big) \cup \big\{ B^{\bfa} \big\}  \bigg]. 
\end{align}
The main idea is that if $(|V| -1)$ coordinate projections $\Pi_j(A)$ are singletons $\{ \alpha_j \}$, we can simply set the $l$th coordinate projection of $A$ to be $\m I_l$ and achieve conditional independence \eqref{eq:lvind2}. This follows immediately from the expression in the display after \eqref{eq:part}. However, our construction of $\m P_H^0$ clearly contains sets of the form $\bigtimes_{j \neq l} \{\alpha_j\} \times \{i_l\}$ for $i_l \in H_l$ and $\bigtimes_{j \neq l} \{\alpha_j\}\times \bar{H}_l$ which are redundant. To avoid this redundancy, we merge these sets in $\m P_{H, l}^{\bfa}$ to form $B^{\bfa} = \bigtimes_{j \neq l} \{\alpha_j\} \times \m I_l$ for every $\bfa \in \m A$. 

It only remains to calculate the cardinality of $\m P_{H, l}$ now. As pointed out in the previous paragraph, $\mid \m P_{H, l}^{\bfa} \mid = |H_l | + 1$ for all $\bfa \in \m A$, and hence the net reduction in the number of elements from $\m P_H^0$ to $\m P_{H, l}$ is 
$$
\m P_H^0  - \m P_{H, l} =  \mid \m A \mid \, \mid H_l \mid. 
$$
It thus remains to calculate $|\m A|$. We need to count the number of distinct $\bfa$ such that \eqref{eq:AH_l} is satisfied. Recall that for any $A \in \m P_H^0$ and any $j \in V$, $\Pi_j(A)$ ranges over the elements of the partition $\m P_{H, j}$. The number of singleton sets in $\m P_{H, j}$ is $| H_j|$ as long as $|H_j| < (d-1)$ (the sets $\{i_j \}$ for $i_j \in H_j$). However, when $|H_j| = (d-1)$, $\bar{H}_j$ is also a singleton set and hence the number of singleton sets in $\m P_{H, j}$ in that case becomes $|H_j| + 1$. Therefore, we conclude, 
$$
\m A = \bigg[ \prod_{j \neq l: |H_j| = d-1} (\mid H_j \mid + 1) \bigg] \bigg[ \prod_{j \neq l : |H_j| < d-1 } \mid H_j \mid \bigg]. 
$$
The proof is completed by noting $| \m A| |H_l | = \prod_{j \in W_l} (|H_j| +1) \prod_{j \in \bar{W}_l} |H_j|$ and taking minimum over $l \in V$ and $H \in \ms{H}$.

\subsection{Proof of Theorem \ref{thm:indparafac}}
The condition that $\bigwedge_{F \in \ms{F}_n} |F| \asymp \log_2(p_n)$ gives that for each clique $F$ the number of nonzero parameters corresponding to that clique grows linearly in $p_n$. This follows because there are at most $2^{p_0}$ parameters for any clique $F_0$ with $|F_0| = p_0$, so we may have up to $2^{\lceil \log_2(p_n) \rceil} \approx p_n$ parameters corresponding to each clique. There are $k_n$ cliques, each of which corresponds to a marginal probability tensor $\pi_n^{(l)}$ with $\rnkp(\pi_n^{(l)}) \asymp \mc{O}(p_n)$. So the joint distribution can be represented by the Hadamard product of $k_n$ probability tensors $\pi_n^{(1)},\ldots,\pi_n^{(k_n)}$, with $\rnkp(\pi_n^{(l)}) \asymp p_n$ for every $l = 1,\ldots,k_n$. Thus, $\sum_{s=1}^{k_n} \rnkp(\pi_n^{(s)}) \asymp k_n p_n$.

\begin{supplement}
\sname{Supplement A}\label{suppA}
\stitle{Title of the Supplement A}
\slink[url]{http://www.e-publications.org/ims/support/dowload/imsart-ims.zip}
\sdescription{Dum esset rex in
accubitu suo, nardus mea dedit odorem suavitatis. Quoniam confortavit
seras portarum tuarum, benedixit filiis tuis in te. Qui posuit fines tuos}

\subsection{Supplemental results}
The following proposition shows that in the two-dimensional case, the nonnegative rank can be bounded by one plus the minimum number of rows and columns that contain all of the cells that differ from a rank one nonnegative matrix. Figure \ref{fig:ranks2d} shows several examples of the essential principle the proof, which is constructive. Although in the case of probability tensors corresponding to log-linear models, this result is a corollary of Theorem \ref{thm:rankgeneraltight}, the constructive approach is very instructive and provided intuition for the general result.

\begin{proposition} \label{prop:rank2d}
Suppose $M$ is a $d \times d$ nonnegative matrix. Let $\lambda^{(1)}, \lambda^{(2)}$ be nonnegative vectors and set $\tilde{M} = \lambda^{(1)} \otimes \lambda^{(2)}$ with
\begin{align*}
&C_M = \{(c_1,c_2) : M_{c_1 c_2} - \tilde{M}_{c_1 c_2} \ne 0\}, &C_M^{(1)} = \{c_1 : (c_1,c_2) \in C_M \} \\
&C_M^{(2)} = \{c_2 : (c_1,c_2) \in C_M\}, 
\end{align*}
and $\ms{H} = \{ H : T_{(C_M,H)} = C_M \}$. Define $|H| = |H_1| + |H_2|$. Then $\rnkp(M) \le 1+\bigwedge_{H \in \ms{H}} |H|$.
\end{proposition}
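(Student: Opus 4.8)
The plan is to give an explicit constructive nonnegative factorization for each $H \in \ms{H}$ and then minimize, matching the constructive spirit that the preceding remark emphasizes. First I would unwind what $\ms{H}$ means in two dimensions: a pair $H = \{H_1, H_2\}$ with $H_1, H_2 \subset \{1,\ldots,d\}$ lies in $\ms{H}$ exactly when every anomalous cell $(c_1,c_2) \in C_M$ satisfies $c_1 \in H_1$ or $c_2 \in H_2$; that is, $H_1$ is a set of rows and $H_2$ a set of columns that jointly cover all cells where $M$ differs from $\tilde{M}$. The one structural fact I would isolate is the contrapositive: for any $i \notin H_1$ and $j \notin H_2$ we have $(i,j) \notin C_M$, hence $M_{ij} = \tilde{M}_{ij} = \lambda^{(1)}_i \lambda^{(2)}_j$. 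This agreement of $M$ with the rank-one $\tilde{M}$ on the ``free block'' $\bar{H}_1 \times \bar{H}_2$ is the only property of the cover that the construction uses.

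Fix $H \in \ms{H}$, write $\bar{H}_1, \bar{H}_2$ for the complements in $\{1,\ldots,d\}$, let $e_k$ be the $k$-th standard basis vector of $\bb{R}^d$, and let $\ind_{\bar{H}_1}$ be the $0/1$ vector indicating $\bar{H}_1$. I would then propose the candidate decomposition
\begin{align*}
M = a \otimes b + \sum_{r \in H_1} e_r \otimes M_{r,\cdot} + \sum_{c \in H_2} \big( M_{\cdot, c} \circ \ind_{\bar{H}_1} \big) \otimes e_c,
\end{align*}
where $M_{r,\cdot}$ is the $r$-th row of $M$, $M_{\cdot,c}$ the $c$-th column, and the background vectors are $a = \lambda^{(1)} \circ \ind_{\bar{H}_1}$ and $b = \lambda^{(2)} \circ \ind_{\bar{H}_2}$, so that $a$ vanishes on the covered rows and $b$ on the covered columns. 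This is manifestly a sum of $1 + |H_1| + |H_2| = 1 + |H|$ outer products of nonnegative vectors (using $M \ge 0$ and $\lambda^{(1)}, \lambda^{(2)} \ge 0$), so once the identity is verified it certifies $\rnkp(M) \le 1 + |H|$, and minimizing over $H \in \ms{H}$ gives the stated bound.

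The verification I would carry out entrywise, splitting indices into the three regions the cover induces. If $i \in H_1$, the background contributes $0$ (since $a_i = 0$) and every column term contributes $0$ (they are zeroed on $\bar{H}_1$), leaving exactly the row term $M_{ij}$. If $i \notin H_1$ but $j \in H_2$, the background again vanishes (since $b_j = 0$) and no row term fires, leaving the column term $M_{ij}$. If $i \notin H_1$ and $j \notin H_2$, only the background survives, giving $\lambda^{(1)}_i \lambda^{(2)}_j = M_{ij}$ by the free-block agreement. The hard part — really the only delicate point — is the overlap $H_1 \times H_2$, where a row and a column could both contribute and produce double counting; restricting the column vectors to $\bar{H}_1$ (equivalently, letting the rows in $H_1$ absorb the entirety of those rows) is precisely the design choice that resolves this, and I would flag it as the crux of the argument. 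I would also remark that this construction is exactly the two-dimensional shadow of the partition $\m{P}_H^0$ used in the proof of Theorem \ref{thm:rankgeneraltight}, which is why the result is a corollary there but instructive to see directly.
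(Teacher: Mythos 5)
Your construction is exactly the paper's: the same background term $(\lambda^{(1)}\circ\ind_{\bar H_1})\otimes(\lambda^{(2)}\circ\ind_{\bar H_2})$, the same $|H_1|$ row terms $e_r\otimes M_{r,\cdot}$, and the same $|H_2|$ column terms with the columns restricted to $\bar H_1$ to avoid double counting on $H_1\times H_2$, verified entrywise over the same three regions. The proof is correct and matches the paper's argument essentially line for line.
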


\begin{proof}
Let $H = (H_1,H_2)$ be any element of $\ms{H}$. Set
\begin{align*}
&\lambda_{0 c_1}^{(1)} = \lambda^{(1)} \ind(c_1 \notin H_1), \text{ and } \\ 
&\lambda_{0 c_2}^{(2)} = \lambda^{(2)} \ind(c_2 \notin H_2), 
\end{align*}
and put $M^{(0)} = \lambda_0^{(1)} \otimes \lambda_0^{(2)}$. Then for $1\le h \le \mid H_1 \mid$ set
\begin{align*}
&\lambda^{(1)}_{h c_1} = \ind(c_1 = H_{1h}), \text{ and } \\
&\lambda^{(2)}_{h c_2} = M_{H_{1h} c_2},
\end{align*}
where $H_{1h}$ is the $h$th element of (any ordering of) $H_1$. Then set $M^{(1)} = \sum_{h=1}^{\mid H_1 \mid} \lambda^{(1)}_{h} \otimes \lambda^{(2)}_{h}$. Finally for $1 \le h \le |H_2|$ set
\begin{align*}
&\lambda^{(1)}_{h c_1} = M_{c_1 H_{2h}} \ind(c_1 \notin H_1) \text{ and, } \\
&\lambda^{(2)}_{h c_2} = \ind(c_2 = H_{2h}),
\end{align*}
and put $M^{(2)} =  \sum_{h=1}^{|H_1|}\lambda^{(1)}_{h} \otimes \lambda^{(2)}_{h}$. Then $M^{(0)} + M^{(1)} + M^{(2)} = M$ and therefore $M$ has a $1 + |H| =  1+\bigwedge_{H' \in \ms{H}} (|H'|)$-term nonnegative PARAFAC expansion. 
\end{proof}

\begin{figure}[h]
 \includegraphics[width=\textwidth]{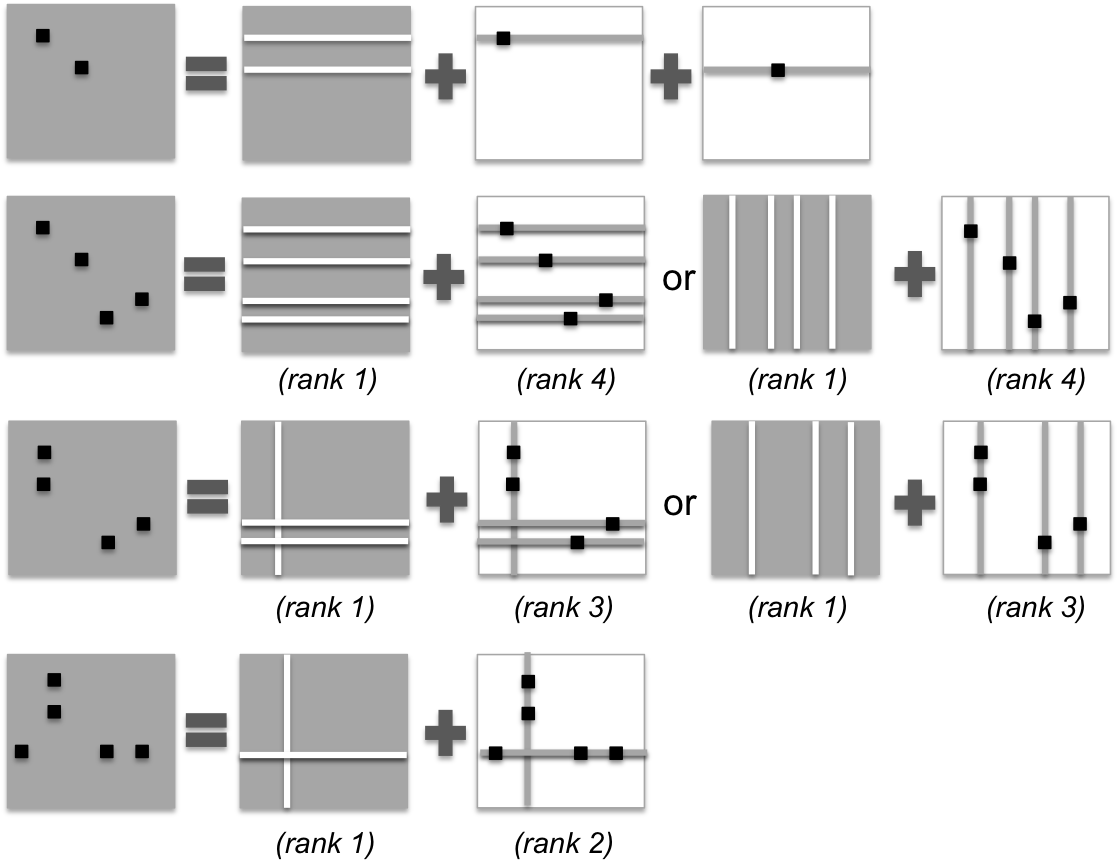}
 \caption{examples} \label{fig:ranks2d}
\end{figure}

\subsection{Supplemental examples}
We provide a more complicated example of the principles underlying Theorem \ref{thm:rankgeneraltight} that includes nonzero three-way interactions. Let $p=5$ with $d$ arbitrary and suppose $S_{\theta}$ is given by
\begin{align*}
 &\theta^{(12)}_{2 c_2} \ne 0 \text{ for } c_2 \ge 2 &\theta^{(23)}_{2 c_3} \ne 0 \text{ for } c_3 \ge 2 \\
 &\theta^{(34)}_{2 c_4} \ne 0 \text{ for } c_4 \ge 2 &\theta^{(45)}_{2 c_5} \ne 0 \text{ for } c_5 \ge 2 \\
 &\theta^{(15)}_{c_1 2} \ne 0 \text{ for } c_1 \ge 2 &\theta^{(24)}_{2 c_4} \ne 0 \text{ for } c_4 \ge 2 \\
 &\theta^{(14)}_{2 c_4} \ne 0 \text{ for } c_4 \ge 2 &\theta^{(124)}_{224} \ne 0 \\
 &\theta^{(25)}_{2 c_5} \ne 0 \text{ for } c_5 \ge 2 &\theta^{(15)}_{2 c_5} \ne 0 \text{ for } c_5 \ge 2 \\
 &\theta^{(125)}_{224} \ne 0,
\end{align*}
so there are two nonzero three-way interactions. As in example \ref{ex:illustrate}, Theorem \ref{thm:rankgeneral} gives the trivial bound of $d^4$ for all $5!=120$ permutations. Letting $H = \{\{2\},\{2\},\{2\},\{2\},\{2\}\} \in \ms{H}$, we know $\rnkp(\pi) \le 2^5 = 32$. Consider the event $A = \{2\} \times \{2\} \times \{2\} \times \barh_4\times \barh_5 \in \mc{P}^0_H$
and the cell $\bs{i} = (2,2,2,4,4)$. We show that (\eqref{eq:lvind1}) holds with $A$ and $\bs{i}$, i.e. that
\begin{align*}
 Pr(A^* \mid A) &= Pr(y_1 = 2 \mid A) Pr(y_2 = 2 \mid A) Pr(y_3 = 2 \mid A) Pr(y_4 = 4 \mid A) Pr(y_5 = 4 \mid A) \\
 &= 1 \times 1 \times 1 \times Pr(y_4 = 4 \mid A) Pr(y_5 = 5 \mid A).
\end{align*}
Following the proof of Theorem \ref{thm:rankgeneraltight}, this is equivalent to showing
\begin{align*}
 \frac{Pr(A)}{\pi_{22244}} = \sum_{c_4 \ne 2} \sum_{c_5 \ne 2} &\frac{\pi_{2224c_5}}{\pi_{22244}} \frac{\pi_{222c_44}}{\pi_{22244}}.
\end{align*}
Since
\begin{align*}
\frac{Pr(A)}{\pi_{22244}} = \sum_{c_4 \ne 2} \sum_{c_5 \ne 2} &\frac{\pi_{222c_4 c_5}}{\pi_{22244}} = \sum_{c_4 \ne 2} \sum_{c_5 \ne 2} \frac{\pi_{222c_4 c_5}}{\pi_{222c_4 4}} \frac{\pi_{222c_4 4}}{\pi_{22244}},
\end{align*}
we need to show that
\begin{align*}
 \frac{\pi_{222c_4 c_5}}{\pi_{222c_4 4}} = \frac{\pi_{2224 c_5}}{\pi_{22244}}.
\end{align*}
All main effects and interactions that correspond to variables $y_1,\ldots,y_4$ will be eliminated in the ratios on both sides, so we focus only on those involving $y_5$. This gives us that the left side of the above display (assuming $c_5 \ne 4$) is
\begin{align*}
 \exp(\theta^{(5)}_{c_5} - \theta^{(5)}_4 + \theta^{(15)}_{2 c_5} - \theta^{(15)}_{24} + \theta^{(25)}_{2 c_5} - \theta^{(25)}_{24} - \theta^{(125)}_{224}).  
\end{align*}
The right side differs only in the value of $y_4$, but since there are no $4,5$ interactions at these levels of the variables and the level of $y_4$ is the same in the numerator and denominator on the right side, the right side has the same value as the left side, despite the fact that there are nonzero three-way interactions. Note that $\theta^{(124)}_{224}$ cancelled on the right side and was either zero or cancelled on the left side as well (the latter occuring when $c_4 = 4$).

We now utilize the same setup to demonstrate the key principle in the proof of \ref{eq:equality} in Theorem \ref{thm:rankgeneraltight}. This principle can be described succinctly as the failure of conditional independence upon replacing sets in the partition $\mc{A}_H^0$ with their union when these sets do not have in common at least $p-1$ singleton events. Let 
\begin{align*}
 A^{\gamma} &= \{\{2\},\{2\},\{2\},\{\ne 2\},\{\ne 2\}\} \\
 A^{\beta} &= \{\{2\},\{2\},\{2\},\{2\},\{\ne 2\}\} \\
 A^* &= \{\{2\},\{2\},\{2\},\{4\},\{4\}\}.
\end{align*}
and note that $A^{\gamma}$ and $A^{\beta}$ share $3=p-2$ singleton events. Then
\begin{align*}
 A^{\gamma} \cup A^{\beta} = \{\{2\},\{2\},\{2\},\mc{I}_4,\{\ne 2\}\}.
\end{align*}
Now we want to show that
\begin{align*}
 Pr(A^* \mid A) \ne Pr(\mc{I}_4 \mid A) Pr(\ne 2 \mid A).
\end{align*}
This will be true iff
\begin{align*}
 \frac{\pi_{2 2 2 c_4 c_5}}{\pi_{2 2 2 c_4 4}} \ne \frac{\pi_{2 2 2 4 c_5}}{\pi_{2 2 2 4 4}}
\end{align*}
for one or more values of $c_4 \in A_4, c_5 \in A_5$. Here, unlike our previous example using this setup, $c_4$ can take any value in $\mc{I}_4$, \emph{including the value 2}. However, $\theta_{\{4,5\}}(2,c_5) \ne 0$ for any $c_5 \ge 2$. So now on the LHS we get
\begin{align*}
 \exp\big\{&\theta_{\{5\}}(c_5) - \theta_{\{5\}}(4) + \theta_{\{1,5\}}(2,c_5) - \theta_{\{1,5\}}(2,4) + \theta_{\{2,5\}}(2,c_5) - \\
 &\theta_{\{2,5\}}(2,4) - \theta_{\{1,2,5\}}(2,2,4) + \theta_{\{4,5\}}(2,c_5)-\theta_{\{4,5\}}(2,4)\big\}.  
\end{align*}
when $c_4 = 2$ and $c_5 \ne 4$. But on the RHS we still get
\begin{align*}
 \exp\big\{&\theta_{\{5\}}(c_5) - \theta_{\{5\}}(4) + \theta_{\{1,5\}}(2,c_5) - \theta_{\{1,5\}}(2,4) + \\
 &\theta_{\{2,5\}}(2, c_5) - \theta_{\{2,5\}}(2,4) - \theta_{\{1,2,5\}}(2,2,4)\big\}
\end{align*}
always, so there are events contained in $A$ where the equality fails, and therefore conditional independence does not hold.

\subsection{Posterior computation for c-Tucker models} \label{app:computation}

The conditional posteriors for all the parameters can be derived in closed form using standard algebra and the sampler cycles through the following steps,
\begin{description}
\item[$Step \, 1.$] For $j : s_j = s$ and $h = 1, \ldots, m$, update $\bfl_h^{(j)}$ from the
following Dirichlet full conditional posterior distribution,
\begin{eqnarray*}
\pi(\bfl_h^{(j)} \mid -) \sim \mbox{Diri}\bigg(a_{j1} + \sum_{i:z_{is} = h} 1(y_{ij} = 1), \ldots, a_{jd_j} + \sum_{i:z_{is} = h} 1(y_{ij} = d_j)\bigg).
\end{eqnarray*}

\item[$Step \, 2.$] Sample the latent class indicators $z_{is}$ for $s \in \{1,\ldots,k\}$ from the following full conditional distribution,
\begin{align*}
\mbox{pr}(z_{is} = h_s \mid -) \propto \bigg(  \prod_{j: s_j = s} \lambda_{h_s y_{ij}}^{(j)} \bigg) \psi_{w_ih_s}^{(s)}, \, h_s = 1, \ldots, m. \notag
\end{align*}

\item[$Step \, 3.$] Sample $w_i$ from the following full-conditional distribution,
\begin{align*}
\mbox{pr}(w_i = l \mid -) \propto \nu_l ~ \prod_{s=1}^k \psi_{l z_{is}}^{(s)}, \, l = 1, \ldots, k.
\end{align*}

\item[$Step \, 4.$] Sample $\nu_l^*$ from the following full-conditional distribution,
\begin{align*}
\pi(\nu_l^* \mid -) \sim \mbox{beta}(1+m_l, \beta + m_{l+}) \, l = 1, \ldots, k,
\end{align*}
where $m_l = \sum_{i=1}^n 1(w_i = l)$ and $m_{l+} = \sum_{i=1}^n 1(w_i > l)$.

\item[$Step \, 5.$] To update $\phi_{lh}^{(s)}$ for $s \in \{1,\ldots,k\}$ define $n_{lh}^{(s)} = \sum_{i:w_i=l} 1(z_{is} = h)$ and
$n_{lh+}^{(s)} = \sum_{i: w_i = l} 1(z_{is} > h)$. Then, the full conditional posterior of $\phi_{lh}^{(s)}$ is
\begin{align*}
\pi(\phi_{lh}^{(s)} \mid -) \sim \mbox{Beta}\bigg(1 + n_{lh}^{(s)}, \delta_1 + n_{lh+}^{(s)}\bigg).
\end{align*}

\item[$Step \, 6.$] Assuming a gamma$(a_{\beta}, b_{\beta})$ prior for $\beta$, the full conditional posterior is
\begin{align*}
\pi(\beta \mid -) \sim \mbox{gamma}\bigg(a_{\beta} + k, b_{\beta} - \sum_{l = 1}^k \log(1 - \nu_l^*)\bigg).
\end{align*}

\item[$Step \, 7.$] Assuming a gamma$(a_{\delta}^{(s)}, b_{\delta}^{(s)})$ prior for $\delta_s$ for each $s \in \{1,\ldots,k\}$, the full conditional posterior is
\begin{align*}
\pi(\delta_1 \mid -) \sim \mbox{gamma}\bigg(a_{\delta}^{(s)} + mk, b_{\delta}^{(s)} - \sum_{l = 1}^k \sum_{h = 1}^m \log(1 - \phi_{lh}^{(s)})\bigg).
\end{align*}

\item[$Step \, 8.$] The groups $s_j$ are updated sequentially. Set $L_{ijl} = Pr(y_j = y_{ij} | s_j = l) = \lambda^{(j)}_{z_{l} y_j}$, and $L_{jl} = \prod_i L_{ijl}$. Then sample $s_j$ as
\begin{align*}
\pi(s_j = l \mid -) = \frac{\xi_l L_{jl}}{\sum_{l=1}^k \xi_l L_{jl}}.
\end{align*}

\item[$Step \, 9.$] Let $n_l = \sum_j \ind_{s_j = l}$, and sample $\xi$ from
\begin{align*}
p(\xi \mid -) \sim \mbox{Dirichlet}(n_1 + 1/k,\ldots,n_k + 1/k).
\end{align*}

\end{description}

\subsection{Supplemental figures for section 6}

\begin{landscape}
\begin{figure}[ht]
\includegraphics[width=8in]{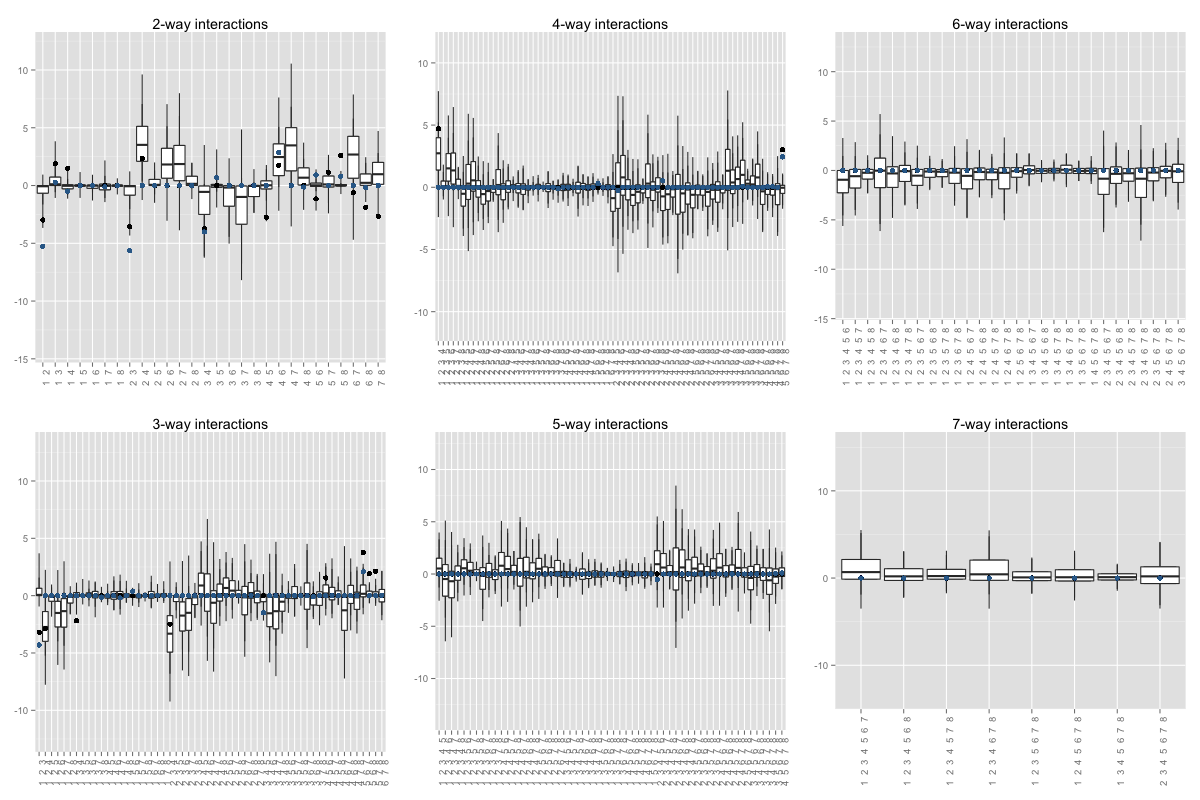} \\
\caption{Boxplots of posterior samples for interaction terms for simulation example where variable groups are learned. Black markers indicate the true parameter values in the simulation, blue markers indicate lasso parameter estimates. } \label{fig:learngroupsbox}
\end{figure}

\begin{table}[h]
\begin{footnotesize}
\begin{tabular}{cccccccccccccccccc}
  \hline
 & \multicolumn{7}{c}{\bf{ADL}} & \multicolumn{10}{c}{\bf{IADL}} \\\cline{2-7} \cline{9-18}
 & \bf{1} & \bf{2} & \bf{3} & \bf{4} & \bf{5} & \bf{6} & & \bf{1} & \bf{2} & \bf{3} & \bf{4} & \bf{5} & \bf{6} & \bf{7} & \bf{8} & \bf{9} & \bf{10} \\ 
  \hline
ADL & \multicolumn{16}{c}{} \\
1 & - & 1.00 & 1.00 & 0.00 & 0.00 & 0.37 & & 1.00 & 1.00 & 1.00 & 0.00 & 0.00 & 0.00 & 1.00 & 0.00 & 0.00 & 1.00 \\ 
  2 & 0.21 & - & 1.00 & 0.00 & 0.00 & 1.00 & & 1.00 & 1.00 & 1.00 & 0.00 & 0.99 & 0.99 & 1.00 & 0.01 & 1.00 & 1.00 \\ 
  3 & 0.26 & 0.25 & - & 1.00 & 0.00 & 1.00 & & 1.00 & 1.00 & 1.00 & 0.00 & 0.25 & 0.17 & 1.00 & 0.00 & 0.68 & 1.00 \\ 
  4 & 0.06 & 0.09 & 0.12 & - & 1.00 & 1.00 & & 1.00 & 1.00 & 1.00 & 1.00 & 0.05 & 1.00 & 0.06 & 1.00 & 0.99 & 0.00 \\ 
  5 & 0.03 & 0.06 & 0.05 & 0.20 & - & 1.00 & & 0.99 & 0.99 & 0.97 & 1.00 & 0.98 & 1.00 & 0.36 & 1.00 & 1.00 & 0.00 \\ 
  6 & 0.10 & 0.14 & 0.18 & 0.38 & 0.21 & - & & 1.00 & 1.00 & 1.00 & 1.00 & 0.99 & 1.00 & 1.00 & 1.00 & 1.00 & 0.00 \\ 
IADL &  \multicolumn{16}{c}{} \\
  1 & 0.20 & 0.27 & 0.25 & 0.16 & 0.11 & 0.27 & & - & 1.00 & 1.00 & 1.00 & 1.00 & 1.00 & 1.00 & 1.00 & 1.00 & 1.00 \\ 
  2 & 0.14 & 0.20 & 0.19 & 0.20 & 0.15 & 0.32 & & 0.42 & - & 1.00 & 1.00 & 0.99 & 1.00 & 1.00 & 1.00 & 1.00 & 1.00 \\ 
  3 & 0.18 & 0.24 & 0.20 & 0.13 & 0.11 & 0.22 & & 0.48 & 0.37 & - & 1.00 & 1.00 & 1.00 & 1.00 & 1.00 & 1.00 & 1.00 \\ 
  4 & 0.04 & 0.08 & 0.08 & 0.19 & 0.19 & 0.25 & & 0.14 & 0.21 & 0.12 & - & 0.04 & 1.00 & 0.00 & 1.00 & 0.99 & 0.00 \\ 
  5 & 0.08 & 0.14 & 0.10 & 0.09 & 0.13 & 0.14 & & 0.20 & 0.17 & 0.22 & 0.09 & - & 1.00 & 1.00 & 1.00 & 1.00 & 0.99 \\ 
  6 & 0.07 & 0.12 & 0.10 & 0.13 & 0.18 & 0.19 & & 0.19 & 0.21 & 0.19 & 0.15 & 0.22 & - & 1.00 & 1.00 & 1.00 & 1.00 \\ 
  7 & 0.15 & 0.22 & 0.15 & 0.09 & 0.10 & 0.15 & & 0.30 & 0.22 & 0.32 & 0.09 & 0.31 & 0.21 & - & 1.00 & 1.00 & 1.00 \\ 
  8 & 0.05 & 0.09 & 0.07 & 0.13 & 0.37 & 0.17 & & 0.16 & 0.18 & 0.18 & 0.14 & 0.26 & 0.25 & 0.21 & - & 1.00 & 0.99 \\ 
  9 & 0.09 & 0.14 & 0.10 & 0.11 & 0.19 & 0.16 & & 0.22 & 0.21 & 0.24 & 0.11 & 0.30 & 0.24 & 0.31 & 0.41 & - & 1.00 \\ 
  10 & 0.17 & 0.19 & 0.15 & 0.06 & 0.05 & 0.09 & & 0.22 & 0.15 & 0.24 & 0.05 & 0.21 & 0.13 & 0.33 & 0.12 & 0.21 & - \\ 
   \hline
\end{tabular}
\end{footnotesize}
\caption{ Estimated Cram\'{e}r's V associations (elements under the main diagonal) and posterior probabilities $Pr(H_{1,\rho}|y^{(1:n)})$ (elements above the main diagonal) in the NLTCS data estimated using the c-Tucker model.} \label{tab:pwcv_nltcs}
\end{table}

\begin{table}[h]
\begin{footnotesize}
\begin{tabular}{cccccccccccccccccc}
  \hline
 & \multicolumn{7}{c}{\bf{ADL}} & \multicolumn{10}{c}{\bf{IADL}} \\\cline{2-7} \cline{9-18}
 & \bf{1} & \bf{2} & \bf{3} & \bf{4} & \bf{5} & \bf{6} & & \bf{1} & \bf{2} & \bf{3} & \bf{4} & \bf{5} & \bf{6} & \bf{7} & \bf{8} & \bf{9} & \bf{10} \\ 
  \hline
ADL & \multicolumn{16}{c}{} \\
1 & - & 1.00 & 1.00 & 0.00 & 0.00 & 0.61 & & 1.00 & 1.00 & 1.00 & 0.00 & 0.00 & 0.00 & 0.99 & 0.00 & 0.00 & 1.00 \\ 
  2 & 0.21 & - & 1.00 & 0.43 & 0.00 & 1.00 & & 1.00 & 1.00 & 1.00 & 0.00 & 0.99 & 1.00 & 1.00 & 0.08 & 1.00 & 1.00 \\ 
  3 & 0.26 & 0.25 & - & 1.00 & 0.00 & 1.00 & & 1.00 & 1.00 & 1.00 & 0.00 & 0.05 & 0.45 & 1.00 & 0.00 & 0.14 & 0.99 \\ 
  4 & 0.07 & 0.10 & 0.15 & - & 1.00 & 1.00 & & 1.00 & 1.00 & 1.00 & 1.00 & 0.38 & 1.00 & 0.32 & 1.00 & 0.98 & 0.00 \\ 
  5 & 0.03 & 0.06 & 0.05 & 0.21 & - & 1.00 & & 0.99 & 1.00 & 0.98 & 1.00 & 1.00 & 1.00 & 0.32 & 1.00 & 1.00 & 0.00 \\ 
  6 & 0.10 & 0.14 & 0.20 & 0.38 & 0.21 & - & & 1.00 & 1.00 & 1.00 & 1.00 & 1.00 & 1.00 & 1.00 & 1.00 & 1.00 & 0.03 \\ 
IADL &  \multicolumn{16}{c}{} \\
  1 & 0.21 & 0.28 & 0.28 & 0.18 & 0.11 & 0.28 & & - & 1.00 & 1.00 & 1.00 & 1.00 & 1.00 & 1.00 & 1.00 & 1.00 & 1.00 \\ 
  2 & 0.14 & 0.19 & 0.19 & 0.21 & 0.16 & 0.34 & & 0.43 & - & 1.00 & 1.00 & 1.00 & 1.00 & 1.00 & 1.00 & 1.00 & 1.00 \\ 
  3 & 0.16 & 0.22 & 0.18 & 0.13 & 0.11 & 0.22 & & 0.48 & 0.40 & - & 1.00 & 1.00 & 1.00 & 1.00 & 1.00 & 1.00 & 1.00 \\ 
  4 & 0.04 & 0.08 & 0.08 & 0.19 & 0.18 & 0.25 & & 0.15 & 0.23 & 0.13 & - & 0.33 & 1.00 & 0.13 & 1.00 & 1.00 & 0.00 \\ 
  5 & 0.06 & 0.12 & 0.09 & 0.10 & 0.14 & 0.14 & & 0.18 & 0.17 & 0.19 & 0.10 & - & 1.00 & 1.00 & 1.00 & 1.00 & 1.00 \\ 
  6 & 0.06 & 0.12 & 0.10 & 0.14 & 0.19 & 0.20 & & 0.19 & 0.21 & 0.18 & 0.17 & 0.27 & - & 1.00 & 1.00 & 1.00 & 1.00 \\ 
  7 & 0.13 & 0.21 & 0.14 & 0.10 & 0.10 & 0.14 & & 0.27 & 0.21 & 0.28 & 0.09 & 0.30 & 0.23 & - & 1.00 & 1.00 & 1.00 \\ 
  8 & 0.05 & 0.09 & 0.07 & 0.14 & 0.39 & 0.19 & & 0.16 & 0.19 & 0.17 & 0.15 & 0.26 & 0.26 & 0.20 & - & 1.00 & 0.95 \\ 
  9 & 0.07 & 0.13 & 0.09 & 0.11 & 0.20 & 0.16 & & 0.20 & 0.21 & 0.23 & 0.12 & 0.31 & 0.25 & 0.30 & 0.42 & - & 1.00 \\ 
  10 & 0.14 & 0.16 & 0.13 & 0.06 & 0.05 & 0.09 & & 0.20 & 0.13 & 0.20 & 0.05 & 0.19 & 0.12 & 0.32 & 0.11 & 0.20 & -   \\ 
   \hline
\end{tabular}
 \end{footnotesize}
 \caption{Estimated Cram\'{e}r's V associations (elements under the main diagonal) and posterior probabilities $Pr(H_{1,\rho}|y^{(1:n)})$ (elements above the main diagonal) in the NLTCS data estimated using copula Gaussian graphical model in \cite{dobra2011copula}.} \label{tab:pwcv_nltcs_copula}
\end{table}
\end{landscape}
\end{supplement}


\bibliographystyle{plainnat}
\bibliography{bptd_bib.bib}
\end{document}